\newtheorem{theorem}{Theorem}[section]
\newtheorem{lemma}[theorem]{Lemma}
\newtheorem{corollary}[theorem]{Corollary}
\renewcommand{\leq}{\leqslant}
\renewcommand{\geq}{\geqslant}
\theoremstyle{definition}
\newtheorem{example}[theorem]{Example}
\theoremstyle{definition}
\newtheorem{remark}[theorem]{Remark}
\numberwithin{equation}{section}
\renewcommand{\leq}{\leqslant}
\renewcommand{\geq}{\geqslant}
\numberwithin{equation}{section} \numberwithin{figure}{section}
\title[Completion of the moduli space for polarized CY manifolds]{Completion of the moduli space for polarized Calabi-Yau manifolds}
 \author[Y. Zhang]{Yuguang Zhang}
 \thanks{The  author is supported in part by  grant NSFC-11271015.}
\address{Mathematical Sciences Center,  Tsinghua University,  Beijing 100084, P.R.China.}
\email{yuguangzhang76@yahoo.com}
\begin{document}
\begin{abstract}
 In this paper, we construct a  completion of the moduli space for polarized Calabi-Yau manifolds by using  Ricci-flat K\"{a}hler-Einstein metrics and  the Gromov-Hausdorff topology, which parameterizes certain Calabi-Yau varieties.  We    then  study the algebro-geometric perperties and the Weil-Petersson geometry of such completion. We show that the completion can be exhausted by sequences of quasi-projective varieties, and  new points added have finite Weil-Petersson distance to the interior.
\end{abstract}
\maketitle
\section{Introduction}
A Calabi-Yau manifold $X$   is a   simply connected complex  projective manifold with trivial canonical bundle $\varpi_{X}\cong \mathcal{O}_{X}$,  and  a polarized Calabi-Yau manifold $(X,L)$ is a Calabi-Yau manifold $X$ with an ample line bundle $L$.
Let $\mathcal{M}^{P}$ be the moduli space of polarized Calabi-Yau manifolds $(X,L)$ of dimension $n$ with a fixed Hilbert polynomial $P=P(\mu)=\chi (X,L^{\mu})$, i.e.  $$ \mathcal{M}^{P}=\big\{(X,L)|P(\mu)=\chi (X,L^{\mu}) \big\}/ \sim,$$ where $(X_{1},L_{1})\sim(X_{2},L_{2})$ if and only if  there is an isomorphism $\psi: X_{1}  \rightarrow X_{2}$ such that $L_{1}=\psi^{*}L_{2}$.  We denote the equivalent class $[X,L]\in  \mathcal{M}^{P}$ represented by $(X,L)$.

 The  compactifications of   moduli spaces were   studied in various cases, for example, the Mumford's compactification of moduli spaces for curves (cf.  \cite{Mum}),  the Satake compactification of moduli spaces for Abelian varieties  (cf. \cite{Sat}), and more  recently the compact moduli spaces  for general type  stable varieties of higher dimension  (cf. \cite{Kol}).   Because of the importance of Calabi-Yau manifolds in mathematics and physics (cf. \cite{Yau2}), it is  also  desirable to have  compactifications of $\mathcal{M}^{P}$.
  The purpose of this paper is to construct a completion  of  $\mathcal{M}^{P}$  in a certain sense, which can be viewed as a  partial compactification.

There are several perspectives  towards   this moduli space  $\mathcal{M}^{P}$. First of all, the  Bogomolov-Tian-Todorov's  unobstructedness  theorem  of Calabi-Yau manifolds implies that $\mathcal{M}^{P}$ is a complex orbifold (cf. \cite{Ti,Tod}).  The variation of   Hodge structures  gives a natural orbifold  K\"{a}hler metric on $\mathcal{M}^{P}$,
called the Weil-Petersson metric,   which  is the curvature of the first Hodge bundle    with a natural  Hermitian metric  (cf. \cite{Ti}). Other natural metrics were also studied in  \cite{Lu} and \cite{Wang2} etc.
From the algebro-geometric point of view,    Viehweg proved   in \cite{Vie}  that
$\mathcal{M}^{P}$ is a  quasi-projective variety, and coarsely represents the moduli functor  $\mathfrak{M}^{P}$ for polarized Calabi-Yau manifolds with   Hilbert polynomial $P$.
The third perspective  is   to understand $\mathcal{M}^{P}$ by  considering  Ricci-flat K\"{a}hler-Einstein metrics.

 For a polarized Calabi-Yau manifold $(X,L)$, Yau's theorem on  the Calabi conjecture, so  called Calabi-Yau theorem, asserts that there exists a unique Ricci-flat K\"{a}hler-Einstein metric $\omega$ with $\omega\in c_{1}(L)$, i.e. the Ricci curvature ${\rm Ric}(\omega)\equiv 0$  (cf. \cite{Ya}).   This  theorem is obtained by showing the existence and the  uniqueness of the solution of  the  Monge-Amp\`ere equation \begin{equation}\label{MA0} (\omega_{0}+\sqrt{-1}\partial\overline{\partial }\varphi)^{n}=(-1)^{\frac{n^{2}}{2}}\Omega\wedge\overline{\Omega}, \  \  \ \sup_{X} \varphi=0,  \end{equation}  for any background K\"{a}hler metric $\omega_{0}\in c_{1}(L)$, and letting  $\omega=\omega_{0}+\sqrt{-1}\partial\overline{\partial }\varphi$,     where   $\Omega$ is a  holomorphic volume form, i.e. a nowhere vanishing section of    $\varpi_{X}$.  We can regard $\mathcal{M}^{P}$ as a parameter space of Ricci-flat Calabi-Yau manifolds.

   In \cite{Grom}, Gromov introduced   the  notion of Gromov-Hausdorff distance between metric spaces, which provides a frame to study families  of Riemannian manifolds. For any two compact metric spaces $A$ and $B$, the Gromov-Hausdorff distance of $A$ and $B$ is  $$d_{GH}(A,B)=\inf\{d_{H}^{Z}(A,B)| \exists \  {\rm isometric \ embeddings} \ A, B\hookrightarrow Z\}, $$ where $Z$ is a metric space,  $d_{H}^{Z}(A,B)$ is the standard  Hausdorff distance between $A$ and $B$ regarded as subsets by the isometric embeddings, and the infimum is taken for all possible $Z$ and isometric  embeddings. We denote $\mathcal{M}et$   the space of the isometric equivalence   classes of  all compact metric spaces equipped with the  topology, called the Gromov-Hausdorff topology,  induced by the Gromov-Hausdorff distance $d_{GH}$. Then $\mathcal{M}et$ is a complete metric space (cf. \cite{Grom,Rong}).   The Gromov-Hausdorff topology was used to study moduli spaces for Einstein metrics by various authors (See  for instance \cite{An1,CT,CT2} etc.).

    The Calabi-Yau theorem gives  a continuous  map \begin{equation}\label{map0} \mathcal{CY}: \mathcal{M}^{P}  \rightarrow \mathcal{M}et,  \  \  {\rm by}  \  \  \left[X,L\right]  \mapsto  (X, \omega),\end{equation}
 where $\omega$ is the unique Ricci-flat K\"ahler-Einstein metric representing $c_{1}(L)$. However, $ \mathcal{CY}$ is not injective in general  since $\mathcal{M}^{P}$ contains the information of complex structures.

 For constructing compactifications of $\mathcal{M}^{P}$, Yau suggested   that one uses the Weil-Petersson metric to obtain a metric  completion of   $\mathcal{M}^{P}$   first, and then tries  to compactify this completion (cf. \cite{Lu}). In \cite{Wang2}, an alternative approach   is proposed by using the Gromov-Hausdorff distance,   instead of the Weil-Petersson metric,   to construct a completion.
  Let  $\overline{\mathcal{CY}( \mathcal{M}^{P} )}$ be the  closure of the image  $\mathcal{CY}( \mathcal{M}^{P} ) $ in $\mathcal{M}et$.  There is a natural metric space structure on $\overline{\mathcal{CY}( \mathcal{M}^{P} )}$ by restricting the Gromov-Hausdorff distance.
    A question  is  to understand $\overline{\mathcal{CY}( \mathcal{M}^{P} )}$ from  the     algebraic geometry  and   the  Weil-Petersson geometry  viewpoints.

     A normal projective variety $X$ is called  $1$-Gorenstein if  the dualizing   sheaf $\varpi_{X}$ is an invertible  sheaf, i.e. a line bundle,  and is called Gorenstein if furthermore $X$ is Cohen-Macaulay. A variety $X$ has only canonical singularities if $X$ is $1$-Gorenstein, and for any resolution $\bar{\pi}: \bar{X}\rightarrow X$, $\bar{\pi}_{*}\varpi_{\bar{X}}=\varpi_{X}$, which is equivalent to that the canonical divisor $\mathcal{K}_{X}$ is Cartier, and $$\mathcal{K}_{\bar{X}} = \bar{\pi}^{*}\mathcal{K}_{X}+\sum\limits_{E} a_{E}E, \ \ {\rm and} \  \  a_{E} \geq 0, $$ where $E$ are  exceptional prime divisors.  If $X$ has only canonical singularities, then the singularities are rational, and $X$ is Cohen-Macaulay (cf. (C)   of \cite[Section 3]{Re}), which implies that $X$ is Gorenstein.  A Calabi-Yau variety $X$ is a normal projective Gorenstein variety with trivial dualizing   sheaf $\varpi_{X}\cong \mathcal{O}_{X}$, and having at most  canonical singularities. A  polarized Calabi-Yau variety $(X,L)$ is a Calabi-Yau variety $X$  with an ample line bundle $L$.

  If $(Y, d_{Y})\in \overline{\mathcal{CY}( \mathcal{M}^{P} )}$, then there is a sequence  $\{[X_{k}, L_{k}]\}\subset \mathcal{M}^{P}$ such that  $\mathcal{CY}([X_{k}, L_{k}])=(X_{k}, \omega_{k})$ converge to $(Y, d_{Y})$ in the Gromov-Hausdorff sense.  Note that the diameters and the volumes satisfy that   $${\rm diam}_{\omega_{k}}(X_{k})\rightarrow {\rm diam}_{d_{Y}}(Y), \  \  \ {\rm Vol}_{\omega_{k}}(X_{k})=\frac{1}{n!}c_{1}(L_{k})^{n}\equiv {\rm cont.}$$ independent of $k$, which imply that    $Y$ is  a  non-collapsed limit.  In  \cite{DS}, Donaldson and Sun studied  the algebro-geometric structure of $Y$, and proved      that $Y$ is homeomorphic to a Calabi-Yau variety $X_{0}$ of dimension $n$. Hence loosely speaking,  $ \overline{\mathcal{CY}( \mathcal{M}^{P} )}$ can be regarded  as a parameter space of certain Calabi-Yau varieties.

 A degeneration of polarized Calabi-Yau manifolds $(\pi_{\Delta}: \mathcal{X}\rightarrow \Delta, \mathcal{L})$ is a flat morphism from a  variety $\mathcal{X}$ of dimension $n+1$  to a disc $ \Delta\subset \mathbb{C}$ such that for any $t\in \Delta^{*}=\Delta \backslash\{0\}$,  $X_{t}=\pi_{\Delta}^{-1}(t)$ is a Calabi-Yau manifold, the central fiber   $X_{0}=\pi_{\Delta}^{-1}(0)$ is singular,  and $\mathcal{L}$  is a relative  ample line bundle  on $\mathcal{X}$.  If  we further  assume that  $X_{0}$ is  a Calabi-Yau  variety, then   the total space $\mathcal{X}$ is normal as  any fiber $X_{t}$ is reduced and normal. Thus
      the relative dualizing sheaf $\varpi_{\mathcal{X}/ \Delta}$  is defined, i.e. $\varpi_{\mathcal{X}/ \Delta}\cong \varpi_{\mathcal{X}}\otimes \pi_{\Delta}^{*}\varpi_{\Delta}^{-1}$,  and is  trivial, i.e. $\varpi_{\mathcal{X}/ \Delta}\cong \mathcal{O}_{\mathcal{X}}$,   since every fiber is normal,  Cohen-Macaulay and Gorenstein.

Let  $(\pi_{\Delta}: \mathcal{X}\rightarrow \Delta, \mathcal{L})$ be a degeneration of polarized Calabi-Yau manifolds with a Calabi-Yau variety $X_{0}$ as the central fiber, and  $\omega_{t}$ be the unique Ricci-flat K\"{a}hler-Einstein metric  on $X_{t}$ representing $c_{1}(\mathcal{L}|_{X_{t}})$, $t\in \Delta^{*}$.  The asymptotic behaviour of $\omega_{t}$ when $t\rightarrow 0$ is studied in \cite{RZ,RZG}, and it is shown that $(X_{t}, \omega_{t})$ converges to a compact metric space of the same dimension   in the Gromov-Hausdorff sense.  This result, together with Donaldson-Sun's theorem,  shows the equivalence between the  algebro-geometric degenerating  Calabi-Yau manifolds to a Calabi-Yau variety and the non-collapsing Gromov-Hausdorff convergence of Ricci-flat K\"{a}hler-Einstein metrics.

 The first goal of the present paper is to investigate  the algebro-geometric structure of   $\overline{\mathcal{CY}( \mathcal{M}^{P} )}$.

  \begin{theorem}\label{main} There is a Hausdorff topological space $\overline{\mathcal{M}}^{P}$, and    a surjection  $$\overline{\mathcal{CY}}:\overline{\mathcal{M}}^{P} \rightarrow   \overline{\mathcal{CY}( \mathcal{M}^{P} )}$$    satisfying  the follows.    \begin{itemize}
  \item[i)]   $\mathcal{M}^{P}$ is  an open dense subset of  $ \overline{\mathcal{M}}^{P}$,   and $\overline{\mathcal{CY}}|_{\mathcal{M}^{P}}=\mathcal{CY}$.
   \item[ii)]  For any $p\in  \overline{\mathcal{M}}^{P}$,   $\overline{\mathcal{CY}}(p)$ is homeomorphic to a Calabi-Yau variety.
     \item[iii)]  There is an exhaustion  $$ \mathcal{M}^{P}\subset  \mathcal{M}_{m(1)} \subset  \mathcal{M}_{m(2)}  \subset  \cdots \subset \mathcal{M}_{m(l)}   \subset \cdots \subset \overline{\mathcal{M}}^{P}=\bigcup_{l\in\mathbb{N}}  \mathcal{M}_{m(l)},$$ where $m(l)\in\mathbb{N}$ for any $l\in\mathbb{N}$,  such that   $\mathcal{M}_{m(l)} $ is a  quasi-projective variety, and there is an ample line bundle $\lambda_{m(l)}$ on   $\mathcal{M}_{m(l)} $.
           \item[iv)]  Let  $(\pi_{\Delta}: \mathcal{X}\rightarrow \Delta, \mathcal{L})$ be a degeneration of polarized  Calabi-Yau manifolds  with   a Calabi-Yau variety $X_{0}$ as the central fiber.  Assume that for any $t\in\Delta^{*}$, there is an ample line bundle $L_{t}$ on $X_{t}$ such that $L_{t}^{k}\cong \mathcal{L}|_{X_{t}}$ for a  $k\in\mathbb{N}$, and $[X_{t},L_{t}]\in \mathcal{M}^{P}$.
      Then there is    a unique morphism $\rho: \Delta \rightarrow \mathcal{M}_{m(l)} $,  for  $l\gg 1$,  such that  $\overline{\mathcal{CY}}(\rho(t))$ is homeomorphic to  $ X_{t}$ for any $t\in\Delta$, and $$ \overline{\mathcal{CY}}(\rho(t)) \rightarrow \overline{\mathcal{CY}}(\rho(0)),$$ when $t\rightarrow 0$, in the Gromov-Hausdorff sense.
      Furthermore,  $\rho^{*}\lambda_{m(l)}=\pi_{\Delta,*} \varpi_{\mathcal{X}/\Delta}^{\nu(l)}$ for a  $\nu(l)\in\mathbb{N}$.

   \end{itemize}
\end{theorem}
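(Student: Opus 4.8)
The plan is to realize $\overline{\mathcal{M}}^{P}$ as an increasing union of geometric quotients of subschemes of Hilbert schemes, in the spirit of Viehweg's proof (see \cite{Vie}) that $\mathcal{M}^{P}$ is quasi-projective but enlarged so as to include the degenerate Calabi-Yau varieties, and to invoke \cite{DS} together with \cite{RZ,RZG} to match the algebraic and the Gromov-Hausdorff descriptions.

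\textbf{Boundedness and set-up.} By \cite{DS}, for each $D>0$ there is an integer $m_{0}(D)$ such that whenever a polarized Calabi-Yau variety $(X,L)$ occurs as a non-collapsed Gromov-Hausdorff limit of members of $\mathcal{M}^{P}$ of diameter at most $D$, then for all $m\ge m_{0}(D)$ the bundle $L^{m}$ is very ample and $H^{i}(X,L^{m})=0$ for $i\ge 1$; hence $h^{0}(X,L^{m})=P(m)$ and $|L^{m}|$ embeds $X$ into $\mathbb{P}^{N_{m}}$ with $N_{m}=P(m)-1$. By \cite{DS,RZ,RZG} these limits are exactly the central fibres of the degenerations considered in iv). Fix $D_{1}<D_{2}<\cdots\to\infty$, put $m(l):=m_{0}(D_{l})$ (after enlarging so that $m(l)$ divides $m(l+1)$), set $P^{[m(l)]}(\mu):=P(m(l)\mu)$, and let $Z_{l}\subset\mathrm{Hilb}_{P^{[m(l)]}}(\mathbb{P}^{N_{m(l)}})$ be the subset of those $[X\hookrightarrow\mathbb{P}^{N_{m(l)}}]$ for which $X$ is a Calabi-Yau variety, $\mathcal{O}_{X}(1)\cong L^{m(l)}$ for a polarization $L$ on $X$, and $[X]$ lies in the closure of the locus of smooth Calabi-Yau manifolds. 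Being a Calabi-Yau variety and carrying such an $L$ are constructible conditions, so $Z_{l}$ is a locally closed subscheme of finite type, and it is invariant under $\mathrm{PGL}(N_{m(l)}+1)$.

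\textbf{Quotients and parts i)--iii).} A polarized Calabi-Yau variety has no infinitesimal automorphisms, $H^{0}(X,T_{X})=0$ --- for the smooth members by the de Rham splitting of a compact simply connected Ricci-flat manifold, in general by the analogous vanishing transported to a resolution via rationality of canonical singularities --- so $\mathrm{Aut}(X,L)$ is finite and the stabilizers of the $\mathrm{PGL}(N_{m(l)}+1)$-action on $Z_{l}$ are finite. The method of \cite{Vie} then produces a geometric quotient $\mathcal{M}_{m(l)}:=Z_{l}/\mathrm{PGL}(N_{m(l)}+1)$, a quasi-projective variety, once one knows the weak positivity of the direct images $\pi_{*}\varpi_{\mathcal{X}/S}^{\nu}$ of the universal family over $Z_{l}$; this persists for families of Calabi-Yau \emph{varieties} because the fibres are Gorenstein with canonical, hence rational, singularities, so $\varpi_{\mathcal{X}/S}$ commutes with base change and the pushforwards are locally free of the expected rank. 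Moreover $\lambda_{m(l)}$, the descent of a suitable power $\pi_{*}\varpi_{\mathcal{X}/S}^{\nu(l)}$ of the Hodge line bundle, is ample on $\mathcal{M}_{m(l)}$. The open dense sublocus of smooth manifolds in $Z_{l}$ descends to $\mathcal{M}^{P}$ by Viehweg's description \cite{Vie}, so $\mathcal{M}^{P}\subset\mathcal{M}_{m(l)}$; since a variety embedded at level $m(l)$ is a fortiori embedded at level $m(l+1)$, the $\mathcal{M}_{m(l)}$ form an increasing chain, and every Gromov-Hausdorff limit has finite diameter so lies in $\mathcal{M}_{m(l)}$ for $l\gg 1$ --- this is iii). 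Setting $\overline{\mathcal{M}}^{P}:=\bigcup_{l}\mathcal{M}_{m(l)}$ with the direct-limit (analytic) topology, separatedness of each $\mathcal{M}_{m(l)}$ makes $\overline{\mathcal{M}}^{P}$ Hausdorff, $\mathcal{M}^{P}$ is open (smoothness is open) and dense (each $\mathcal{M}_{m(l)}$ is the closure of $\mathcal{M}^{P}$), and $\overline{\mathcal{CY}}$ is defined by sending $[X,L]$ to $(X,d_{\omega})$, $\omega$ the possibly singular Ricci-flat K\"{a}hler-Einstein metric in $c_{1}(L)$, which agrees with $\mathcal{CY}$ on $\mathcal{M}^{P}$; this gives i), while ii) is \cite{DS} at the boundary points, and surjectivity of $\overline{\mathcal{CY}}$ onto $\overline{\mathcal{CY}(\mathcal{M}^{P})}$ holds because by \cite{DS,RZ,RZG} every limit metric space is the one attached to a point of $\overline{\mathcal{M}}^{P}$.

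\textbf{Part iv).} Given a degeneration $(\pi_{\Delta}:\mathcal{X}\to\Delta,\mathcal{L})$ with Calabi-Yau-variety central fibre and $L_{t}$ with $L_{t}^{k}\cong\mathcal{L}|_{X_{t}}$, I would extend $L_{t}$ over $0$: after the standard \'etale base change killing the monodromy on the Picard group, and using $\varpi_{\mathcal{X}/\Delta}\cong\mathcal{O}_{\mathcal{X}}$, there is a relatively ample line bundle $\mathcal{L}'$ on $\mathcal{X}$ with $\mathcal{L}'|_{X_{t}}=L_{t}$ and $(\mathcal{L}')^{k}\cong\mathcal{L}$. For $l\gg 1$ the system $|(\mathcal{L}')^{m(l)}|$ embeds $\mathcal{X}\hookrightarrow\mathbb{P}^{N_{m(l)}}\times\Delta$ over $\Delta$ with every fibre of Hilbert polynomial $P^{[m(l)]}$ (by flatness, using $[X_{t},L_{t}]\in\mathcal{M}^{P}$ and, for $t=0$, the vanishing of higher cohomology at level $m(l)$), hence defines a morphism $\Delta\to\mathrm{Hilb}_{P^{[m(l)]}}(\mathbb{P}^{N_{m(l)}})$ landing in $Z_{l}$; composing with $Z_{l}\to\mathcal{M}_{m(l)}$ gives $\rho:\Delta\to\mathcal{M}_{m(l)}$ with $\overline{\mathcal{CY}}(\rho(t))$ homeomorphic to $X_{t}$. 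Uniqueness of $\rho$ is immediate from the separatedness of $\mathcal{M}_{m(l)}$ (a morphism from $\Delta$ to a separated scheme is determined by its restriction to $\Delta^{*}$). The convergence $\overline{\mathcal{CY}}(\rho(t))\to\overline{\mathcal{CY}}(\rho(0))$ in the Gromov-Hausdorff sense is the theorem of \cite{RZ,RZG}. Finally $\rho^{*}\lambda_{m(l)}=\pi_{\Delta,*}\varpi_{\mathcal{X}/\Delta}^{\nu(l)}$ is functoriality of the Hodge line bundle under base change: $\lambda_{m(l)}$ was defined as such a descent, and $\pi_{\Delta,*}\varpi_{\mathcal{X}/\Delta}^{\nu(l)}$ is a line bundle because $\varpi_{\mathcal{X}/\Delta}\cong\mathcal{O}_{\mathcal{X}}$.

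\textbf{Main obstacle.} The hard part is the construction of the quotients $\mathcal{M}_{m(l)}$ with their ample line bundles. Unlike the smooth situation of \cite{Vie}, $Z_{l}$ contains singular Calabi-Yau fibres, so one must check that the weak-positivity estimates for $\pi_{*}\varpi_{\mathcal{X}/S}^{\nu}$ still hold for families of Calabi-Yau \emph{varieties} (here the Gorenstein and rational-singularity properties of canonical singularities, and the compatibility of the relative dualizing sheaf with base change, are crucial), and that the $\mathrm{PGL}$-action on $Z_{l}$ remains proper with a separated quotient across the boundary --- which relies on the uniqueness of degenerate limits of Calabi-Yau varieties, an input of minimal-model-program type. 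A secondary technical point is to carry the passage from $\mathcal{L}$ to its roots $L_{t}$ and then to the $m(l)$-th powers through the construction in a way compatible with the Gromov-Hausdorff rescalings, so that the chain $\{\mathcal{M}_{m(l)}\}$ genuinely exhausts $\overline{\mathcal{M}}^{P}$.
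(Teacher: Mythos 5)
Your overall strategy is the same as the paper's: use Donaldson--Sun to get, for each diameter bound, a uniform exponent $m$ and a uniform projective space so that all limits land in a fixed Hilbert scheme; carve out the locally closed $SL$-invariant locus of Calabi--Yau varieties in the closure of the smooth locus; build each $\mathcal{M}_{m(l)}$ as a quasi-projective geometric quotient via Viehweg's Section~8 machinery (boundedness, openness via Kawamata's deformation theorem for canonical singularities and the triviality of the dualizing sheaf, separatedness via the uniqueness of Calabi--Yau fillings, weak positivity/stability); chain the quotients together by re-embedding with higher powers; and derive the Gromov--Hausdorff statements in ii) and iv) from \cite{DS} combined with the diameter bound and convergence results of \cite{RZ,RZG}. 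Part iv)'s uniqueness from the coarse moduli/separatedness property and the identification $\rho^{*}\lambda_{m(l)}=\pi_{\Delta,*}\varpi^{\nu(l)}_{\mathcal{X}/\Delta}$ also match the paper.

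There is, however, one concrete flaw in your set-up. You define $Z_{l}$ by requiring $\mathcal{O}_{X}(1)\cong L^{m(l)}$ for an honest polarization $L$ on the (possibly singular) variety $X$. For a singular Calabi--Yau variety arising as a Gromov--Hausdorff limit, such an $m(l)$-th root of $\mathcal{O}_{X}(1)$ need not exist as a line bundle: Donaldson--Sun only produces the embedding by (the limit of) $L_{k}^{m}$, and the limiting polarization may only be a $\mathbb{Q}$-line bundle on $X_{\infty}$. The paper explicitly flags this (see the remark after Lemma~3.3: ``if $X_{p}$ is singular, there may not exist such ample line bundle''), and accordingly its locus $\mathcal{H}_{N}$ imposes only that $X_{p}$ be a Calabi--Yau variety lying in $\overline{\mathcal{H}_{N}^{o}}$ with $\varpi_{X_{p}}\cong\mathcal{O}_{X_{p}}$, with no root condition. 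With your definition, $Z_{l}$ could miss exactly those boundary points, which would break both the surjectivity of $\overline{\mathcal{CY}}$ and the exhaustion in iii); it is also unclear that the existence of such a root is an open or even constructible condition, which would disrupt the GIT construction. The fix is simply to drop that condition. A second, more minor deviation: in iv) you extend the root $L_{t}$ across $t=0$ by a base change killing monodromy on $\operatorname{Pic}$; the paper avoids this entirely by using the identification $\mathcal{M}^{P}\cong\mathcal{M}^{P_{k}}$ and working directly with $\mathcal{L}^{m(l)}$, which is cleaner since the extension of the root to the singular central fiber is exactly the sort of thing that can fail.
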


 \begin{remark}\label{re}
 In  general, we do not expect  $ \mathcal{M}_{m(l)}  = \overline{\mathcal{M}}^{P}$ for some $m(l)$ because of  the  lack of the  boundedness condition  for singular Calabi-Yau varieties (cf. Section 3 in  \cite{Gro}).
    \end{remark}

When $n=2$, a Calabi-Yau variety is a K3 orbifold, and a degeneration of K3 surfaces to a K3 orbifold is called a degeneration of type I.  It is well-known that one can fill the holes in the moduli space of K\"{a}hler  polarized K3 surfaces by some K\"{a}hler  K3 orbifolds, and  obtain  a complete moduli space
     (cf. \cite{Kob,KT}).
     The relationship between such moduli space and the degeneration of Ricci-flat K\"{a}hler-Einstein metrics  is also established    in \cite{Kob,KT}.  Theorem \ref{main} is  a generalization of \cite{Kob,KT} to higher dimensional polarized  Calabi-Yau manifolds.

In \cite{Ma},   K\"ahler-Einstein metrics are used to construct compactifications of moduli spaces  for K\"ahler-Einstein orbifolds, and it is proved that such compactification  coincides with the standard Mumford's  compactification in the case of  curves.   The moduli space  of Fano manifolds admitting K\"ahler-Einstein metrics is   constructed in a recent preprint  \cite{Od}, which generalizes the earlier work \cite{Ti2} for del Pezzo surfaces.   The Gromov-Hausdorff compactification of such  moduli space for del Pezzo surfaces of each degree  is  studied
         in  \cite{OSS} (see also \cite{MM} for the quartic case),  and it is proven to agree with  certain algebro-geometric compactification.   We can regard Theorem \ref{main} as an analog result  of  \cite{OSS} for the Calabi-Yau case.

     Now we study the Weil-Petersson geometry of $\overline{\mathcal{M}}^{P}$.   Note that for any flat family $(\pi_{\Delta}: \mathcal{X} \rightarrow \Delta, \mathcal{L})$ of polarized Calabi-Yau manifolds with  Hilbert polynomial $P$,  there is a unique morphism $f: \Delta \rightarrow \mathcal{M}^{P}$, since $\mathcal{M}^{P}$ coarsely represents the  moduli functor $\mathfrak{M}^{P}$.   The Weil-Petersson metric $\omega_{WP}$ is an  orbifold K\"ahler metric   on $\mathcal{M}^{P}$ (cf. \cite{Ti})  characterized by   $$ f^{*}\omega_{WP}=-\frac{\sqrt{-1}}{2\pi}\partial\overline{\partial}\log \int_{X_{t}}(-1)^{\frac{n^{2}}{2}} \Omega_{t} \wedge \overline{\Omega}_{t},$$
     where  $\Omega_{t} $ is a relative holomorphic volume form, i.e. a nowhere vanishing  section of $ \varpi_{\mathcal{X}/\Delta}$.
       The metric  $ \omega_{WP}$ is the curvature of the first Hodge bundle  with a natural  Hermitian metric.

       In \cite{CGH}, Candelas,  Green and H\"{u}bsch found some nodal degenerations of Calabi-Yau 3-folds with finite Weil-Petersson distance.  In general,   \cite{Wang1}  shows that  if $(\pi_{\Delta}: \mathcal{X}\rightarrow \Delta, \mathcal{L})$ is    a degeneration of polarized  Calabi-Yau manifolds, and if the central fiber   $X_{0}$ is a Calabi-Yau variety, then the Weil-Petersson distance between $\{0\}$ and the interior $\Delta^{*}$ is finite, i.e.  $\omega_{WP}$ is not complete on $\Delta^{*}$.   Conversely,   if we assume that the Weil-Petersson distance  of $\{0\}$  is finite,  then $\pi_{\Delta}: \mathcal{X}\rightarrow \Delta$ is birational to a degeneration  $\pi_{\Delta}': \mathcal{X}'\rightarrow \Delta$ such that $\mathcal{X}\backslash X_{0}\cong \mathcal{X}'\backslash X_{0}'$, and $X_{0}'$ is a Calabi-Yau variety by recent papers  \cite{To} and \cite{Ta}.  As a consequence,  the  algebro-geometric degenerating  Calabi-Yau manifolds to a Calabi-Yau variety is equivalent  to the finiteness of the Weil-Petersson distance.

         Our next result shows that the  points  in $\overline{\mathcal{M}}^{P}\backslash   \mathcal{M}^{P}$ have finite Weil-Petersson distance.

       \begin{theorem}\label{main2} Let   $\overline{\mathcal{M}}^{P}$ and   $\overline{\mathcal{CY}}$  be the same as in Theorem \ref{main}.
          \begin{itemize}
  \item[i)]   For any point  $x\in  \overline{\mathcal{M}}^{P}\backslash   \mathcal{M}^{P}$,  there is a curve $\gamma$ such that $\gamma(0)=x$,  $\gamma( (0,1] )\subset  \mathcal{M}^{P}$ and the length of $\gamma$  under the Weil-Petersson metric  $\omega_{WP}$ is finite, i.e.   $${\rm length}_{\omega_{WP}}(\gamma) < \infty .$$
    \item[ii)]  Let  $(\pi_{\Delta}: \mathcal{X}\rightarrow \Delta, \mathcal{L})$ be a degeneration of polarized  Calabi-Yau manifolds   such that  for any $t\in\Delta^{*}$, $L_{t}^{k}\cong \mathcal{L}|_{X_{t}}$ for a  $k\in\mathbb{N}$, and $[X_{t},L_{t}]\in \mathcal{M}^{P}$, where $L_{t}$ is an ample line bundle.   If the Weil-Petersson distance between  $0\in \Delta$ and the interior  $\Delta^{*}$ is finite,
      then there is    a unique morphism $\varrho: \Delta \rightarrow \mathcal{M}_{m(l)} $,  for  $l\gg 1$,  such that $\mathcal{CY}(\varrho(t))$ is homeomorphic to  $X_{t}$,  $t\in\Delta^{*}$.
   \end{itemize}
\end{theorem}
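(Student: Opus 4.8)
\emph{Proof proposal.}
For both parts the plan is to produce a one‑parameter algebraic family through the point in question and then to invoke the finiteness theorem of \cite{Wang1}, the birational results of \cite{To,Ta}, and part iv) of Theorem \ref{main}; the argument runs parallel to the proof of that part.

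For i), given $x\in\overline{\mathcal{M}}^{P}\setminus\mathcal{M}^{P}$, I first use Theorem \ref{main} iii) to fix $l$ with $x\in\mathcal{M}_{m(l)}$. Since $\mathcal{M}^{P}$ is open and dense in $\overline{\mathcal{M}}^{P}$, it is locally closed in the quasi‑projective variety $\mathcal{M}_{m(l)}$; let $Z$ be an irreducible component of its Zariski closure passing through $x$, so that $\mathcal{M}^{P}\cap Z$ is dense in $Z$ and $\dim Z\ge 1$ (otherwise $x\in\mathcal{M}^{P}$). Cutting $Z$ by general hyperplanes through $x$, normalizing, and restricting to a small disc about a preimage of $x$, I obtain a non‑constant morphism $\rho\colon\Delta\to\mathcal{M}_{m(l)}$ with $\rho(0)=x$ and $\rho(\Delta^{*})\subset\mathcal{M}^{P}$. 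After a finite base change $\Delta_{1}\to\Delta$, $\rho$ is realized by a degeneration $(\mathcal{X}_{1}\to\Delta_{1},\mathcal{L}_{1})$ of polarized Calabi‑Yau manifolds whose central fiber $X_{0}$ is a Calabi‑Yau variety in the class $x$ (this is built into the construction of the $\mathcal{M}_{m(l)}$ and the identification in Theorem \ref{main} ii)). Since $X_{0}$ is a Calabi‑Yau variety, \cite{Wang1} gives $\dist_{\omega_{WP}}(0,\Delta_{1}^{*})<\infty$, and the estimate there on $\int_{X_{t}}(-1)^{n^{2}/2}\Omega_{t}\wedge\overline{\Omega}_{t}$ — which extends continuously and strictly positively across $t=0$ because $X_{0}$ has at worst canonical singularities, so that $\omega_{WP}$ extends across $t=0$ tamely after a further finite base change — shows moreover that a radial arc in $\Delta_{1}$ from $0$ has finite $\omega_{WP}$‑length. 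Its image in $\overline{\mathcal{M}}^{P}$ under $\rho$ (composed with $\Delta_{1}\to\Delta$) is a curve $\gamma$ of the same finite $\omega_{WP}$‑length with $\gamma(0)=x$ and $\gamma((0,1])\subset\mathcal{M}^{P}$, which is i). (Equivalently one may use the natural $L^{2}$‑Hermitian metric on $\lambda_{m(l)}$, which extends continuously over $\mathcal{M}_{m(l)}$ with curvature a positive multiple of $\omega_{WP}$ on $\mathcal{M}^{P}$; continuity of that metric alone does not bound a radial length, and it is the estimate of \cite{Wang1} that does.)

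For ii), only $\dist_{\omega_{WP}}(0,\Delta^{*})<\infty$ is assumed, so the central fiber $X_{0}$ of $\mathcal{X}$ need not be a Calabi‑Yau variety. By \cite{To,Ta}, $\pi_{\Delta}\colon\mathcal{X}\to\Delta$ is birational to a degeneration $\pi_{\Delta}'\colon\mathcal{X}'\to\Delta$ with $\mathcal{X}'\setminus X_{0}'\cong\mathcal{X}\setminus X_{0}$ — hence $X_{t}'\cong X_{t}$ for $t\in\Delta^{*}$ — whose central fiber $X_{0}'$ is a Calabi‑Yau variety; since $\varpi_{\mathcal{X}'/\Delta}$ is then trivial, one can equip $\mathcal{X}'$ with a relative ample $\mathcal{L}'$ restricting to $L_{t}$ (up to a fixed power) over $\Delta^{*}$, so that $(X_{0}',\mathcal{L}'|_{X_{0}'})$ is a polarized Calabi‑Yau variety of Hilbert polynomial $P$. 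Then $(\pi_{\Delta}'\colon\mathcal{X}'\to\Delta,\mathcal{L}')$ satisfies the hypotheses of Theorem \ref{main} iv), which yields a unique morphism $\varrho\colon\Delta\to\mathcal{M}_{m(l)}$, $l\gg 1$, with $\overline{\mathcal{CY}}(\varrho(t))$ homeomorphic to $X_{t}'$ for all $t$; for $t\in\Delta^{*}$ we have $\varrho(t)\in\mathcal{M}^{P}$, so $\mathcal{CY}(\varrho(t))=\overline{\mathcal{CY}}(\varrho(t))$ is homeomorphic to $X_{t}'\cong X_{t}$, i.e. $\varrho$ extends to $\Delta$ the classifying morphism $\Delta^{*}\to\mathcal{M}^{P}$ of the family $(\mathcal{X}\to\Delta,\mathcal{L})$. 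Uniqueness of $\varrho$ is automatic since $\mathcal{M}_{m(l)}$ is separated and $\Delta$ is a smooth curve.

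I expect the main difficulty to lie in the polarization bookkeeping in ii): the modification of \cite{To,Ta} must be carried out so that $\mathcal{X}'$ admits a relative ample line bundle agreeing (up to a fixed power) with $\mathcal{L}$ off the central fiber and with $(X_{0}',\mathcal{L}'|_{X_{0}'})$ of Hilbert polynomial $P$, so that Theorem \ref{main} iv) applies verbatim; the analogous point in i) — that the arc $\rho$ genuinely arises from a degeneration with central fiber the Calabi‑Yau variety $\overline{\mathcal{CY}}(x)$, and that the estimate of \cite{Wang1} really does bound the length of a radial arc rather than only the distance — is the other place where some care is needed.
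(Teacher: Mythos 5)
Your proposal is correct and follows essentially the same route as the paper: produce a one-parameter degeneration with Calabi--Yau central fiber through the point and invoke \cite[Proposition 2.3]{Wang1} for i), and invoke \cite{To,Ta} plus the argument of Theorem \ref{main} iv) for ii). The two difficulties you flag are sidestepped in the paper by working upstairs: for i) the disc is taken directly in $\mathcal{H}_{N_{m(l)}}$ with $\tau(0)=p$ and $\tau(\Delta^{*})\subset\mathcal{H}^{o}_{N_{m(l)}}$, so the universal family immediately realizes the degeneration with central fiber $X_{p}$ (no hyperplane cuts or base change needed, and Wang's estimate on $\omega_{WP}$ bounds the length of the radial arc, not merely the distance), while for ii) the birational modification of \cite[Theorem 1.2]{To} is quoted already in polarized form, $(\mathcal{X}\backslash X_{0},\mathcal{L})\cong(\mathcal{X}'\backslash X_{0}',\mathcal{L}')$, so no separate construction of $\mathcal{L}'$ is required.
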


This paper is organized as follows.   Section 2 studies Ricci-flat K\"{a}hler-Einstein metrics. In Section 2.1, we recall the generalized Calabi-Yau theorem
  in  \cite{EGZ}, and then in Section 2.2, we use \cite{DS} to  improve the earlier work in \cite{RZG,RZ}, i.e. we show  that along a degeneration of polarized Calabi-Yau manifolds with a Calabi-Yau variety as the central fiber, the Gromov-Hausdorff limit of Ricci-flat K\"{a}hler-Einstein metrics on general fibers is homeomorphic to  the central fiber.  All of properties about the  Gromov-Hausdorff topology in Theorem \ref{main}   are from this section. The technique developed in this section can also  be used to study   the unique filling-in problem  for degenerations of Calabi-Yau manifolds, i.e. Corollary \ref{coro}, which has  independent interests.
     In Section 3, we study the algebraic geometry of the moduli space. Firstly, we recall the  Viehweg's  construction of quasi-projective moduli space for polarized  Calabi-Yau manifolds  (cf. \cite{Vie}) in Section 3.1. Secondly, in Section 3.2, we construct an enlarged moduli space  of $ \mathcal{M}^{P}$ by using the construction of moduli spaces for varieties with at worst canonical singularities  (cf.  Section 8 of  \cite{Vie}).  More precisely, for any $m>0$,  we construct a moduli functor $\mathfrak{M}_{m}$ for polarized   Calabi-Yau varieties  that can be embedded in $\mathbb{CP}^{N}$, $N=N(m)$.  Then we use the results in  Section 8 of  \cite{Vie} to prove that  $\mathfrak{M}_{m}$ can be coarsely represented by a quasi-projective variety. Any $ \mathcal{M}_{m(l)}$ in Theorem \ref{main}  comes from this construction.
  We prove Theorem \ref{main} and Theorem \ref{main2} in Section 4, and finally, we  give a remark for compactifications in Section 5.

In this paper, the notion scheme stands for   separated schemes of finite type  over $\mathbb{C}$, and the notion  variety stands for either  a reduced irreducible scheme  or the set of  its closed points with the natural analytic topology depending on the context. A point in a scheme means  a closed point. For a flat family of schemes $\pi_{T}: \mathcal{X}\rightarrow T$   over $T$, we denote  $X_{t}=\pi_{T}^{-1}(t)$ the fiber $\mathcal{X}\times_{T} \{t\} $ over a point  $t\in T$.   Since a Calabi-Yau manifold  $X$ is defined to be simply connected,  the natural map $\mathcal{M}^{P} \rightarrow \mathcal{M}^{P_{\mu}}$ by $(X,L) \mapsto (X, L^{\mu})$ for any $\mu\in\mathbb{N}$ is injective, where $P_{\mu}(k)=P(\mu k)$,  and thus is an isomorphism. Thus,   we identify  $\mathcal{M}^{P}$ and $\mathcal{M}^{P_{\mu}}$ in this paper.\\

\noindent {\bf Acknowledgements:} The author is   grateful to Prof.  Mark Gross and Prof.   Chenyang Xu   for very useful discussions, especially C.  Xu for  pointing  out the proof of Lemma \ref{l3.1} to him. The author would also like to thank Prof. Valentino Tosatti for sending him the preprints \cite{Bo,Ta} and some comments.
 Part of this work was carried out while the  author was visiting the  Department  of Mathematics,  University of Cambridge, which he  thanks  for the hospitality. Finally, the author  thanks   referees  for some comments.    \\

\section{Ricci-flat K\"{a}hler-Einstein metrics}
In this section, we study  the Gromov-Hausdorff convergence of Ricci-flat K\"{a}hler-Einstein metrics along degenerations of polarized Calabi-Yau manifolds.
\subsection{Singular K\"{a}hler-Einstein metric}
 There is a notion of K\"{a}hler metric for normal varieties (cf.  \cite[Section 5.2]{EGZ}). A smooth  K\"{a}hler metric $\omega$ on a normal variety $X$ is a   usual K\"{a}hler metric on the regular locus $X_{reg}$ such that for any singular point $p\in X$, there is a neighborhood $U_{p}$ with an embedding $ U_{p}\hookrightarrow \mathbb{C}^{N_{p}}$, and a smooth strongly pluri-subharmonic function $\upsilon_{p}$ on  $\mathbb{C}^{N_{p}}$ satisfying $\omega|_{U_{p}\bigcap X_{reg}}=\sqrt{-1}\partial\overline{\partial}\upsilon_{p}|_{U_{p}\bigcap X_{reg}}$.   If these functions  $\upsilon_{p}$ are not smooth, we call $\omega$ a singular K\"{a}hler metric.  A   K\"{a}hler metric $\omega$, possibly    singular,   defines a class $[\omega]$ in $H^{1}(X, \mathcal{PH}_{X})$, where $\mathcal{PH}_{X}$ denotes the sheaf of pluri-harmonic functions on $X$.

  If $L$ is an ample line bundle on $X$,  there is an  $m>0$ such that $L^{m}$ is very ample, and $H^{i}(X, L^{\mu})=\{0\}$ for any $i>0$ and $\mu \geq m$.
  A basis $\Sigma=\{s_{0}, \cdots, s_{N}\}$ of $H^{0}(X, L^{m})$ gives an embedding $\Phi_{\Sigma}:X \hookrightarrow \mathbb{CP}^{N}$ by $x \mapsto [s_{0}(x), \cdots, s_{N}(x)]$, which satisfies  $L^{m}=\Phi_{\Sigma}^{*}\mathcal{O}_{\mathbb{CP}^{N}}(1)$,  where $N=\dim_{\mathbb{C}}H^{0}(X, L^{m})-1$.
 The pullback  $\omega_{\Sigma}=\Phi_{\Sigma}^{*}\omega_{FS}$ of the Fubini-Study metric is a smooth K\"{a}hler metric in the above sense such that $[\omega_{\Sigma}]=m c_{1}(L)\in NS_{\mathbb{R}}(X)$. The Hermitian metric $h_{FS}$ of $\mathcal{O}_{\mathbb{CP}^{N}}(1)$,  whose curvature is the Fubini-Study metric,  restricts to an  Hermitian metric $h_{\Sigma}=\Phi_{\Sigma}^{*}h_{FS}$ on $L^{m}$, which satisfies that $\omega_{\Sigma}=-\frac{\sqrt{-1}}{2}\partial\overline{\partial}\log |\vartheta|_{h_{\Sigma}}^{2}$ on $X_{reg}$ for any nonvanishing local section $\vartheta$ of $L^{m}$.   We   regard $\Phi_{\Sigma}(X)$ as a   point in    $\mathcal{H}il_{N}^{P}$, denoted still by  $\Phi_{\Sigma}(X)$, where   $\mathcal{H}il_{N}^{P}$ is the Hilbert scheme  parametrizing  subschemes of $\mathbb{CP}^{N}$ with the  Hilbert polynomial $P=P(k)=\chi (X,L^{mk})$.

    If  $\Sigma'=\{s_{0}', \cdots, s_{N}'\}$ is  another basis of $H^{0}(X, L^{m})$, we have a matrices   $u=(u_{ij})\in SL(N+1)$ such that $[s_{0}', \cdots, s_{N}']=[\sum\limits_{i=0}^{N}s_{i}u_{i0}, \cdots, \sum\limits_{i=0}^{N}s_{i}u_{iN}],$ denoted by $[\Sigma']=[\Sigma]\cdot u$,  and thus, $\Phi_{\Sigma'}(x)=\sigma(u,\Phi_{\Sigma}(x))$ for any $x\in X$, where $\sigma:  SL(N+1)\times \mathbb{CP}^{N} \rightarrow \mathbb{CP}^{N}$ is the natural $SL(N+1)$-action on $\mathbb{CP}^{N}$. Note that  $\sigma$ induces an   $SL(N+1)$-action on the Hilbert scheme $\mathcal{H}il_{N}^{P}$,  denoted still by $\sigma:  SL(N+1)\times \mathcal{H}il_{N}^{P} \rightarrow \mathcal{H}il_{N}^{P}$.  We have $\Phi_{\Sigma'}(X)=\sigma(u, \Phi_{\Sigma}(X))$, and we   denote   the orbit \begin{equation}\label{eq2.1}O(X,L^{m})=\{\sigma(u, \Phi_{\Sigma}(X))| u\in SL(N+1) \}\subset \mathcal{H}il_{N}^{P}. \end{equation}

 In  \cite{EGZ}, a   generalized Calabi-Yau theorem is obtained for polarized  Calabi-Yau varieties, i.e. the existence and the uniqueness of  singular Ricci-flat K\"{a}hler-Einstein metrics  with bounded potentials.   More precisely, for a polarized Calabi-Yau variety $(X,L)$,   Theorem 7.5 of \cite{EGZ} says that there is a unique bounded function $\varphi$ satisfying the following Monge-Amp\`ere equation \begin{equation}\label{MA} (\omega_{\Sigma}+\sqrt{-1}\partial\overline{\partial }\varphi)^{n}=(-1)^{\frac{n^{2}}{2}}\Omega\wedge\overline{\Omega}, \  \  \ \sup_{X} \varphi=0,  \  {\rm and} \  \varphi\geq - C, \end{equation}   where $\Omega$ is a  holomorphic volume form, i.e. a nowhere vanishing section of the dualizing sheaf   $\varpi_{X}$.
  The restriction of the singular K\"{a}hler metric  $\omega=\omega_{\Sigma}+\sqrt{-1}\partial\overline{\partial }\varphi$ on the regular locus $X_{reg}$  is a smooth  Ricci-flat K\"{a}hler-Einstein metric, and  $\omega\in [\omega_{\Sigma}]=m c_{1}(L)$. Furthermore, $\omega$ is  unique in $m c_{1}(L)$, and particularly is  independent of the choice of $\Phi_{\Sigma}$.    By the boundedness of $\varphi$, we have that $h=\exp(-\varphi) h_{\Sigma}$ is an  Hermitian metric on $L^{m}$ whose curvature is $\omega$.

 We define an  $L^{2}$-norm $\| \cdot \|_{L^{2}(h)}$ on $H^{0}(X,L^{m})$ by
  \begin{equation}\label{normal}\| s\|_{L^{2}(h)}^{2}=\int_{X}|s|^{2}_{h} \omega^{n}=\int_{X}e^{-\varphi}|s|^{2}_{h_{\Sigma}} \omega^{n}.  \end{equation} If $h'$ is another Hermitian metric with the same curvature $\omega$, then $\partial\overline{\partial}\log \frac{h}{h'}\equiv 0$, i.e.   $\log \frac{h}{h'}$ is a pluriharmonic function on a closed normal variety $X$,  and thus $h=e^{\varsigma}h'$ for a constant $\varsigma$.  If $\Sigma_{h}=\{s_{0}, \cdots, s_{N}\}$ is an  orthonormal basis    of $H^{0}(X,L^{m})$ with respect to $\| \cdot \|_{L^{2}(h)}$, then $\Sigma_{h'}=\{e^{-\frac{\varsigma}{2}}s_{0}, \cdots, e^{-\frac{\varsigma}{2}}s_{N}\}$ is  orthonormal  with respect to $\| \cdot \|_{L^{2}(h')}$, and furthermore $\Sigma_{h}$ and $\Sigma_{h'}$ induce the same embedding $ \Phi_{\Sigma_{h}}= \Phi_{\Sigma_{h'}}$.

 If $\Sigma_{h}$ and $\Sigma_{h}'$ are
     two orthonormal bases   of $H^{0}(X,L)$ with respect to $\| \cdot \|_{L^{2}(h)}$, there is an  $u\in SU(N+1)\subset SL(N+1)$ such that  $[\Sigma_{h}]=[\Sigma_{h}']\cdot u$, $\Phi_{\Sigma_{h}'}(x)=\sigma(u,\Phi_{\Sigma_{h}}(x))$ for any $x\in X$, and thus $\Phi_{\Sigma_{h}'}(X)=\sigma(u,\Phi_{\Sigma_{h}}(X))$ in
     $\mathcal{H}il_{N}^{P}$.   The action $\sigma$ and $h$ induce an
       $SU(N+1)$-orbit \begin{equation}\label{eq2.2} RO(X,L^{m})=\{\sigma(u,\Phi_{\Sigma_{h}}(X))| u\in SU(N+1) \}\subset  O(X,L^{m}).  \end{equation} Note that  $ RO(X,L^{m})$  is compact,  and depends only on the the singular K\"{a}hler metric  $\omega$,  but not on the choice of $h$, though   the norm $\| \cdot \|_{L^{2}(h)}$ does.

\subsection{Gromov-Hausdorff convergence of Ricci-flat K\"{a}hler-Einstein metrics}
Let $(\pi_{\Delta}: \mathcal{X}\rightarrow \Delta, \mathcal{L})$ be a
 degeneration of polarized Calabi-Yau manifolds with a Calabi-Yau  variety  $X_{0}$ as the central fiber.
    By taking a certain power of $\mathcal{L}$, we assume that
     $\mathcal{L}$ is relative  very ample, and $R^{i}\pi_{\Delta,*}(\mathcal{L})=\{0\}$ for $i>0$. There is  a morphism  $\tilde{\Phi}:\mathcal{X} \hookrightarrow \mathbb{CP}^{N}\times \Delta \rightarrow \mathbb{CP}^{N}$ by composing an embedding and the projection   such that $\mathcal{L}\cong \tilde{\Phi}^{*} \mathcal{O}_{\mathbb{CP}^{N}}(1)$. In this section, we always assume that any  $\tilde{\Phi}(X_{t})$ does not belong to a proper linear  subspace of $\mathbb{CP}^{N}$ by shrinking $\Delta$ if necessary.   We denote $\omega_{o,t}=\tilde{\Phi}^{*} \omega_{FS}|_{X_{t}}$, and denote  $\omega_{t}$ the unique Ricci-flat K\"{a}hler-Einstein metric in $[\omega_{o,t}]$ for any $t\in \Delta$. Note that   $\omega_{t}=\omega_{o,t}+\sqrt{-1}\partial\overline{\partial}\varphi_{t}$ for a unique bounded  potential function $\varphi_{t}$ with $\sup\limits_{X_{t}}\varphi_{t}=0$, which  satisfies the Monge-Amp\`ere equation (\ref{MA0}) and (\ref{MA}) respectively.

 The limiting  behaviour of $\omega_{t}$, when $t \rightarrow 0$,  is studied intensively in \cite{RuZ},  \cite{RZG} and \cite{RZ}.
 Theorem 1.4 in \cite{RZG} asserts that the diameter has  a uniform  upper bound  $D>0$, i.e.    \begin{equation}\label{diam}{\rm diam}_{\omega_{t}}(X_{t})\leq D, \end{equation} for any $t\in \Delta^{*}$.  Furthermore,  for any smooth family of embeddings $F_{t}:X_{0,reg} \rightarrow X_{t}$ with $F_{0}={\rm Id}$, we have  \begin{equation}\label{metric convergence} F^{*}_{t}\omega_{t} \rightarrow \omega_{0},  \  \  \   \varphi_{t}\circ F_{t}  \rightarrow \varphi_{0}, \  \  \ {\rm and} \  \ \varphi_{t} >-C\end{equation} for a constant $C>0$,  when $t \rightarrow 0$ in the $C^{\infty}_{loc}$-sense, where  $\varphi_{0}$ is the solution of (\ref{MA}), and $\omega_{0}$ is the unique singular Ricci-flat K\"{a}hler-Einstein metric in $c_{1}(\mathcal{L}|_{X_{0}}) $.    In   \cite{RZ}, it is proved that, when $t\rightarrow 0$,  $(X_{t}, \omega_{t})$ converges to a compact metric space $X_{\infty}$ in the Gromov-Hausdorff topology, and $X_{\infty}$  is the metric completion of $(X_{0,reg}, \omega_{0})$.  Actually, $X_{\infty}$ is a Calabi-Yau variety by the following theorem due to Donaldson and Sun  (cf.  \cite{DS,Bo2}).

  \begin{theorem}[Theorem 1.2 of \cite{DS}]\label{D-S}    Let $(X_{k}, L_{k})$ be a sequence  of polarized Calabi-Yau manifolds of dimension $n$ with the same Hilbert polynomial $P$, and $\omega_{k}\in c_{1}(L_{k})$ be the unique Ricci-flat K\"ahler-Einstein metric. We assume that  $${\rm Vol}_{\omega_{k}}(X_{k})=\frac{1}{n!}c_{1}(L_{k})^{n}\equiv\upsilon, \  \   {\rm diam}_{\omega_{k}}(X_{k}) \leqslant D$$ for  constants $D>0$ and $\upsilon>0$, and furthermore,  $(X_{k}, \omega_{k})$ converges to a compact metric space $X_{\infty}$ in the Gromov-Hausdorff sense.  Then we have the follows.
    \begin{itemize}
  \item[i)] $X_{\infty}$ is  homeomorphic to a Calabi-Yau variety, denoted still by $X_{\infty}$.
     \item[ii)]There are constants $m>0$ and $\bar{N}>0$ satisfying the following.  For any $k$,  there is an orthonormal basis $\Sigma_{k}$ of $H^{0}(X_{k}, L_{k}^{m})$ with respect to the $L^{2}$-norm induced by $\omega_{k}$, which induces an embedding $\Phi_{\Sigma_{k}}: X_{k} \hookrightarrow \mathbb{CP}^{\bar{N}}$ with $L_{k}^{m}= \Phi_{\Sigma_{k}}^{*} \mathcal{O}_{\mathbb{CP}^{\bar{N}}}(1)$. And $\Phi_{\Sigma_{k}}( X_{k})$ converges to $X_{\infty}$ in the Hilbert scheme $\mathcal{H}il_{\bar{N}}^{P_{m}}$.
          \item[iii)]  The metric space structure on $X_{\infty}$ is induced by the unique  singular Ricci-flat K\"{a}hler-Einstein metric $\omega \in \frac{1}{m} c_{1}( \mathcal{O}_{\mathbb{CP}^{\bar{N}}}(1)|_{X_{\infty}})$.
       \end{itemize}
   \end{theorem}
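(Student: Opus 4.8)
\smallskip
\noindent\emph{Proof proposal.} The plan is to follow the strategy of Donaldson and Sun, in three stages: first extract the metric structure of the Gromov--Hausdorff limit, then promote it to an algebraic structure via a uniform partial $C^{0}$-estimate, and finally identify the algebraic limit as a Calabi--Yau variety. For \emph{Stage 1 (metric structure)}: since $\Ric(\omega_{k})\equiv 0$ with $\Vol_{\omega_{k}}(X_{k})\equiv\upsilon>0$ and $\mathrm{diam}_{\omega_{k}}(X_{k})\leq D$, the $(X_{k},\omega_{k})$ form a non-collapsing sequence of Ricci-flat manifolds, so by Cheeger--Colding theory the limit $X_{\infty}$ decomposes as $\mathcal{R}\sqcup\mathcal{S}$ with $\mathcal{R}$ open and dense, carrying a $C^{1,\alpha}$ Riemannian metric which, by Anderson's $\ve$-regularity together with elliptic bootstrapping for the Einstein equation, is smooth and Ricci-flat; the complex structures converge on compact subsets of $\mathcal{R}$, so $(\mathcal{R},\omega_{\infty})$ is a smooth Ricci-flat K\"{a}hler manifold and $\mathcal{S}$ is closed of Hausdorff codimension $\geq 4$. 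The Hermitian bundles $(L_{k}^{m},h_{k})$, for any fixed $m$, converge on $\mathcal{R}$ to a Hermitian line bundle with curvature $m\omega_{\infty}$.

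For \emph{Stage 2 (partial $C^{0}$-estimate and algebraic structure)}, the crucial step is to produce a single $m$ and $\bar{N}$ so that an $L^{2}(\omega_{k})$-orthonormal basis $\Sigma_{k}$ of $H^{0}(X_{k},L_{k}^{m})$ gives a uniformly non-degenerate embedding into a fixed $\mathbb{CP}^{\bar{N}}$; equivalently, the Bergman density $\sum_{i}|s_{i}|^{2}_{h_{k}^{m}}$ is bounded below by a uniform positive constant. I would prove this by: (a) classifying the metric tangent cones of $X_{\infty}$ as normal affine K\"{a}hler cones (Cheeger--Colding--Tian and the sequel of Donaldson--Sun); (b) on such a cone, building an approximately holomorphic, approximately peaked section of a high power of the bundle, cutting it off with small energy using $\mathrm{codim}\,\mathcal{S}\geq 4$, transplanting it to $X_{k}$ near a nearby point, and correcting the $\db$-error by the H\"{o}rmander/Bochner--Kodaira $L^{2}$-estimate, where Ricci-flatness supplies exactly the curvature positivity needed after tensoring with a high power; (c) a ``three-circle'' growth estimate for holomorphic functions on the cones plus a contradiction argument to make $m$ independent of the sequence. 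Granting this, H\"{o}rmander again shows the $\Phi_{\Sigma_{k}}$ separate points and tangents with uniformly bounded image geometry; by compactness of the Hilbert scheme $\mathcal{H}il_{\bar{N}}^{P_{m}}$ a subsequence of $\Phi_{\Sigma_{k}}(X_{k})$ converges to a closed subscheme $W$, and one checks $W$ is reduced and irreducible with $W_{red}$ homeomorphic to $X_{\infty}$, the homeomorphism restricting to a biholomorphism $\mathcal{R}\cong W_{reg}$; this yields ii).

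For \emph{Stage 3 (Calabi--Yau property)}, set $X_{\infty}:=W_{red}$. Normality follows from Serre's criterion: $R_{1}$ from $\mathrm{codim}\,\mathcal{S}\geq 4$, and $S_{2}$ from the rationality of the singularities, a consequence of the local cone structure. The holomorphic volume forms $\Omega_{k}$ satisfy $\int_{X_{k}}(-1)^{n^{2}/2}\Omega_{k}\wedge\overline{\Omega}_{k}=n!\,\upsilon$, so after normalization they converge on $\mathcal{R}$ to a nowhere-vanishing holomorphic $n$-form $\Omega_{\infty}$ of finite $L^{2}$-norm; a Hartogs-type extension across $\mathcal{S}$ (using $\mathrm{codim}\,\mathcal{S}\geq 2$ and normality) trivializes $\varpi_{X_{\infty}}$, and the $L^{2}$-finiteness forces the discrepancies $a_{E}\geq 0$, i.e.\ $X_{\infty}$ has at worst canonical singularities. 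The uniqueness assertion in Theorem~7.5 of \cite{EGZ} then identifies $\omega_{\infty}$ with the singular Ricci-flat K\"{a}hler--Einstein metric in $\tfrac{1}{m}c_{1}(\mathcal{O}_{\mathbb{CP}^{\bar{N}}}(1)|_{X_{\infty}})$, giving iii).

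The main obstacle is clearly Stage 2 — the uniform partial $C^{0}$-estimate. The construction and $\db$-correction of peak sections requires both the full Cheeger--Colding--Tian picture of tangent cones and nontrivial complex analysis (H\"{o}rmander estimates with singular weights, and the three-circle growth argument to defeat the a priori dependence of $m$ on the sequence); making the cutoff errors small uses the codimension-four bound essentially, and handling base points whose tangent cones are not isolated-singularity cones (iterated cone structure) is the most delicate part.
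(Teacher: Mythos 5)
This statement is quoted verbatim from Donaldson--Sun (Theorem 1.2 of \cite{DS}); the paper does not reprove it. The only argument the paper supplies is the paragraph immediately following the statement, which upgrades the conclusion of \cite[Proposition 4.15]{DS} (``normal projective with log-terminal singularities'') to ``Calabi--Yau variety'': the normalized holomorphic volume forms $\Omega_{k}$ are parallel and converge on the regular locus, so $\varpi_{X_{\infty}}$ is trivial and $\mathcal{K}_{X_{\infty}}$ is Cartier; log-terminal plus Cartier canonical divisor then forces canonical singularities, hence rational singularities, Cohen--Macaulay, Gorenstein. Your Stage~3 reproduces exactly this upgrade, and your Stages~1--2 are a faithful roadmap of the actual Donaldson--Sun proof (Cheeger--Colding non-collapsing theory, the regular/singular decomposition with $\mathrm{codim}\,\mathcal{S}\geq 4$, the uniform partial $C^{0}$-estimate via peak sections on tangent cones corrected by H\"ormander's $L^{2}$-method, the three-circle argument for uniformity of $m$, and compactness of the Hilbert scheme). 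So your proposal is consistent with, and goes well beyond, the paper's treatment; as you acknowledge, the hard analytic core (Stage~2) is described rather than proved, which is all one can expect in outline for a theorem of this depth.

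Two small imprecisions in Stage~3 are worth noting. First, finiteness of $\int_{X_{\infty,reg}}(-1)^{n^{2}/2}\Omega_{\infty}\wedge\overline{\Omega}_{\infty}$ by itself only gives log-terminal singularities, i.e.\ discrepancies $a_{E}>-1$; it is the integrality of the $a_{E}$ coming from $\mathcal{K}_{X_{\infty}}$ being Cartier that upgrades this to $a_{E}\geq 0$, i.e.\ canonical. You should separate these two steps. Second, deducing $S_{2}$ ``from the rationality of the singularities'' is circular as stated: rationality is normally a consequence of the singularities being canonical, which presupposes normality. In \cite{DS} normality of the algebraic limit is established directly (via the identification of $W$ with the metric limit and the structure of its regular part), not through Serre's criterion in this order.
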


By   Proposition 4.15 of  \cite{DS},   $X_{\infty}$ is  a projective normal variety with only log-terminal singularities.  Note that the holomorphic volume forms $\Omega_{k}$ are parallel with respect to $\omega_{k}$, and converge  to a holomorphic volume form $\Omega_{\infty}$ on the regular locus $X_{\infty,reg}$ along the Gromov-Hausdorff convergence by normalizing $\Omega_{k}$ if  necessary. Thus the dualizing sheaf  $\varpi_{X_{\infty}}$ is trivial, i.e.  $\varpi_{X_{\infty}}\cong \mathcal{O}_{X_{\infty}}$, and $X_{\infty}$ is 1--Gorenstein.  Furthermore, the canonical divisor $\mathcal{K}_{X_{\infty}}$ is Cartier and trivial, which implies that $X_{\infty}$ has at worst  canonical singularities.  Then $X_{\infty}$ has only rational singularities,  $X_{\infty}$ is Cohen-Macaulay and is  Gorenstein.  Consequently,  $X_{\infty}$ is  a Calabi-Yau variety.

A natural question is what is the relationship between these two Calabi-Yau varieties  $X_{0}$ and $X_{\infty}$ in our setting.

   \begin{lemma}\label{pro0.1}  Let $(\pi_{\Delta}: \mathcal{X}\rightarrow \Delta, \mathcal{L})$ be a
 degeneration of polarized Calabi-Yau manifolds with a Calabi-Yau  variety  $X_{0}$ as the central fiber.   If $\omega_{t}$, $t\in \Delta^{*}$, is  the unique  Ricci-flat K\"{a}hler metric on $X_{t}$ with   $\omega_{t}\in c_{1}(\mathcal{L}|_{X_{t}})$, then $(X_{t}, \omega_{t}) $ converges to a compact metric space $X_{\infty}$ homeomorphic to $X_{0}$ in the Gromov-Hausdorff sense. As a consequence, the singular Ricci-flat K\"{a}hler  metric $\omega_{0}  \in  c_{1}(\mathcal{L}|_{X_{0}})$ induces a compact metric space structure on $X_{0}$.
\end{lemma}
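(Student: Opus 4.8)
The plan is to combine the analytic convergence results of \cite{RZG,RZ} recalled above with the uniqueness part of Donaldson--Sun's theorem. By \cite{RZ} we already know that $(X_t,\omega_t)$ converges in the Gromov--Hausdorff topology to a compact metric space $X_\infty$, and by Theorem \ref{D-S} together with the discussion following it, $X_\infty$ is (homeomorphic to) a Calabi--Yau variety carrying a singular Ricci-flat K\"ahler--Einstein metric. So the entire content of the lemma is the identification $X_\infty\cong X_0$ as complex varieties (equivalently, as topological spaces), after which the last sentence follows because the metric completion of $(X_{0,\mathrm{reg}},\omega_0)$ is by \cite{RZ} exactly $X_\infty$, and this completion then transports a compact metric structure to $X_0$ via the homeomorphism.

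First I would exploit the smooth family of embeddings $F_t\colon X_{0,\mathrm{reg}}\to X_t$ with $F_0=\mathrm{Id}$ and the local $C^\infty$ convergence $F_t^*\omega_t\to\omega_0$ from \eqref{metric convergence}: this says that on the regular part the Gromov--Hausdorff limit metric restricted to the open dense set coming from $X_{0,\mathrm{reg}}$ is isometric to $(X_{0,\mathrm{reg}},\omega_0)$, so $X_\infty$ is the metric completion of $(X_{0,\mathrm{reg}},\omega_0)$ — this is precisely the statement of \cite{RZ} quoted before Theorem \ref{D-S}. Next I would run the Donaldson--Sun machinery: by part ii) of Theorem \ref{D-S}, for $m$ large there are orthonormal bases $\Sigma_t$ of $H^0(X_t,\mathcal L^m|_{X_t})$ (with respect to the $L^2$-norm induced by $\omega_t$) giving embeddings $\Phi_{\Sigma_t}\colon X_t\hookrightarrow\mathbb{CP}^{\bar N}$ whose images converge in the Hilbert scheme $\mathcal{H}il_{\bar N}^{P_m}$ to the embedded $X_\infty$. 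On the other hand, the fixed relative embedding $\tilde\Phi$ gives a second family of embeddings of the $X_t$ (after passing to a suitable power), and by the $L^2$-comparison discussion in Section 2.1, any two such bases differ by an element of $SL(\bar N+1)$; hence $\Phi_{\Sigma_t}(X_t)$ and $\tilde\Phi(X_t)$ lie in the same $SL(\bar N+1)$-orbit $O(X_t,\mathcal L^m|_{X_t})$ in the Hilbert scheme. Since $\tilde\Phi$ extends across $t=0$ with flat central fiber $\tilde\Phi(X_0)$, I would argue that $X_\infty$ lies in the closure of $\bigcup_t O(X_t,\cdot)$, i.e. $X_\infty$ is a flat limit of the $X_t$, and compare it with the flat limit $X_0$ already present as the central fiber of $\pi_\Delta$.

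The main obstacle is exactly this last comparison: a priori the flat limit inside the Hilbert scheme depends on the choice of basis, i.e. on how one moves within the $SL(\bar N+1)$-orbit as $t\to0$, and different choices can produce genuinely different (non-isomorphic, even non-homeomorphic) flat limits. The key point is to show that the particular normalization coming from the Ricci-flat metrics $\omega_t$ does not "degenerate the polarization" — concretely, that the $SL(\bar N+1)$ matrices $u_t$ relating $\Phi_{\Sigma_t}$ to $\tilde\Phi|_{X_t}$ stay bounded (converge, after passing to a subsequence and modifying by the compact factor $SU(\bar N+1)$) as $t\to0$. This should follow from the uniform estimates in \eqref{metric convergence}: the potentials $\varphi_t$ are uniformly bounded, so the metrics $h_t=e^{-\varphi_t}h_{\Sigma}$ on $\mathcal L^m|_{X_t}$ are uniformly equivalent to the fixed Fubini--Study metric, whence the $L^2(h_t)$ inner products on $H^0(X_t,\mathcal L^m|_{X_t})$ converge to the $L^2(h_0)$ inner product on $H^0(X_0,\mathcal L^m|_{X_0})$ in a way compatible with the flat family; this forces $u_t\to u_\infty\in GL(\bar N+1)$ and identifies $X_\infty$ with the original central fiber $X_0$ (up to the projective automorphism $u_\infty$). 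Once $X_\infty\cong X_0$ as varieties, the homeomorphism and the induced compact metric structure on $X_0$ are immediate, completing the proof; I would close by noting that the independence of $\omega_0$ from the chosen embedding (stated after \eqref{MA}) makes the resulting metric structure on $X_0$ canonical.
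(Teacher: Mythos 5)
Your proposal is correct and follows essentially the same route as the paper: reduce to showing that the change-of-basis matrices between the Donaldson--Sun embeddings and the fixed relative embedding stay bounded, which the paper does by checking (via the uniform potential bound, the local smooth convergence $F_t^*\omega_t\to\omega_0$, and the volume identity $\Vol_{\omega_t}(X_t)=\Vol_{\omega_0}(X_{0,\mathrm{reg}})$ to control the tail of the $L^2$ integrals) that the $L^2(h_t)$ Gram matrices of a fixed basis converge to those on the central fiber, and then absorbing the remaining ambiguity into the compact group $U(\bar N+1)$.
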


\begin{proof}
We  denote $L_{t}=\mathcal{L}|_{X_{t}}$, and  denote $h_{t}$ the Hermitian metric on $L_{t}$, whose curvature  is the Ricci-flat K\"{a}hler-Einstein metric $\omega_{t}$.
 We apply Theorem \ref{D-S} to a sequence $t_{k} \rightarrow 0$, and then $X_{\infty}$ is a Calabi-Yau variety.
Furthermore, there are constants $m>0$ and $\bar{N}>0$ such that,  for any $k$,  there is an orthonormal basis $\Sigma_{t_{k}}$ of $H^{0}(X_{t_{k}}, L_{t_{k}}^{m})$ with respect to the $L^{2}$-norm  $\|\cdot\|_{L^{2}(h_{t_{k}}^{m})}$  inducing  an embedding $\Phi_{\Sigma_{k}}: X_{k} \hookrightarrow \mathbb{CP}^{\bar{N}}$ with $L_{t_{k}}^{m}= \Phi_{\Sigma_{k}}^{*} \mathcal{O}_{\mathbb{CP}^{\bar{N}}}(1)$. And $\Phi_{\Sigma_{t_{k}}}( X_{t_{k}})$ converges to $X_{\infty}$ in the Hilbert scheme $\mathcal{H}il_{\bar{N}}^{P_{m}}$ under  the natural  analytic topology.

Note that $\mathcal{L}^{m}$ is relative very ample, and thus
 there is a  morphism   $\Psi: \mathcal{X}\hookrightarrow \mathbb{CP}^{\bar{N}} \times\Delta \rightarrow  \mathbb{CP}^{\bar{N}}$  by composing an embedding and the projection   such that $\mathcal{L}^{m}=\Psi^{*}\mathcal{O}_{\mathbb{CP}^{\bar{N}}}(1)$.  Note that $ X_{t}$,  $t\in\Delta$, has the same Hilbert polynomial $P$, and hence  $P(m)=h^{0}(X_{t},L_{t}^{m})=\bar{N}+1$ for $m\gg1$.
 We  have that $\Psi( X_{t})$, for any $t\in\Delta$,  is   not included in any proper linear  subspace of $\mathbb{CP} ^{\bar{N}}$.
  If we denote  $\bar{h}_{FS}$   the Hermitian metric  on $\mathcal{O}_{\mathbb{CP}^{\bar{N}}}(1)$ whose curvature  is the Fubini-Study metric $\omega_{FS}$, then   the restriction $\bar{h}_{o,t}$ of $\Psi^{*}\bar{h}_{FS}$ on $X_{t}$ has the curvature $\bar{\omega}_{o,t}=\Psi^{*}\omega_{FS}|_{X_{t}}$.
  We can  choose an Hermitian metric  $h_{t}$ on $L_{t}^{m}$  by
  $h_{t}^{m}=e^{-m\varphi_{t}}\bar{h}_{o,t}$, where $\varphi_{t}$ is the potential function for  the unique Ricci-flat K\"ahler-Einstein metric in $c_{1}(L_{t})$, i.e. $\omega_{t}=\frac{1}{m} \bar{\omega}_{o,t}+ \sqrt{-1}\partial\overline{\partial}\varphi_{t}$ and $\sup\limits_{X_{t}} \varphi_{t}= 0$.

  Let $Z_{0}, \cdots, Z_{\bar{N}}$ be   sections of $\mathcal{O}_{\mathbb{CP}^{\bar{N}}}(1)$  such that the restrictions of $z_{0}=\Psi^{*}Z_{0}, \cdots, z_{\bar{N}}=\Psi^{*} Z_{\bar{N}}$ on $X_{t}$ form  a  basis  of $ H^{0}(X_{t}, L_{t}^{m})$, and  $z_{0}|_{X_{0}}, \cdots, z_{\bar{N}}|_{X_{0}}$   are orthonormal with respect to   $\|\cdot\|_{L^{2}(h_{0}^{m})}$.
 Now  for any compact  subset  $U\subset X_{0,reg}$,   by (\ref{metric convergence}), we have  $$\int_{F_{t}(U)}\langle z_{i}, z_{j}\rangle_{h_{t}^{m}} \omega_{t}^{n} \rightarrow \int_{U}\langle z_{i}, z_{j}\rangle_{h_{0}^{m}} \omega_{0}^{n}, \ \ {\rm when} \ t\rightarrow 0, \ \ {\rm and} $$ \[\begin{split}
  \int_{X_{t}\backslash F_{t}(U)}|\langle z_{i}, z_{j}\rangle_{h_{t}^{m}}| \omega_{t}^{n}  & \leq e^{mC}\sup_{X_{t}}|z_{i}|_{\bar{h}_{o,t}}|z_{j}|_{\bar{h}_{o,t}}{\rm Vol}_{\omega_{t}}(X_{t}\backslash F_{t}(U)) \\ & \leq C_{1} {\rm Vol}_{\omega_{t}}(X_{t}\backslash F_{t}(U)),
\end{split}\]
  for a constant $C_{1}>0$ independent of $t$.  Since ${\rm Vol}_{\omega_{t}}(X_{t})= {\rm Vol}_{\omega_{0}}(X_{0})={\rm Vol}_{\omega_{0}}(X_{0,reg})$,  we obtain $$ \int_{X_{t}}\langle z_{i}, z_{j}\rangle_{h_{t}^{m}} \omega_{t}^{n} \rightarrow \int_{X_{0}}\langle z_{i}, z_{j}\rangle_{h_{0}^{m}} \omega_{0}^{n}=\delta_{ij}$$ when $t\rightarrow0$,  by taking $U$ larger and larger, and a diagonal sequence. Thus there is a family of matrices    $v_{t}=(v_{t,ij})\in GL(\bar{N}+1)$ such that  $\Sigma_{t}'=\{\sum\limits_{i=0}^{\bar{N}}z_{i}v_{t,i0}, \cdots, \sum\limits_{i=0}^{\bar{N}}z_{i}v_{t,i\bar{N}}\}$ is an orthonormal basis of $ H^{0}(X_{t}, L_{t}^{m})$ with with respect to $\|\cdot\|_{L^{2}(h_{t}^{m})}$, and $v_{t}\rightarrow $Id  in $GL(\bar{N}+1)$ when $t\rightarrow 0$.

  There are    $u_{k}=(u_{k,ij})\in U(\bar{N}+1)$ such that $\Sigma_{t_{k}}=\Sigma_{t_{k}}'\cdot u_{k}$, and, by passing to a subsequence,  $u_{k} \rightarrow u_{0}$ in $U(\bar{N}+1)$.  Thus $w_{k}=(\det( v_{t_{k}}\cdot u_{k}))^{-\frac{1}{\bar{N}+1}} v_{t_{k}}\cdot u_{k}\rightarrow w_{0}=(\det(u_{0}))^{-\frac{1}{\bar{N}+1}}u_{0}$ in $SL(\bar{N}+1)$ when $t_{k}\rightarrow 0$. Under the $SL(\bar{N}+1)$-action $ \sigma$, we have $\Phi_{\Sigma_{t_{k}}}( X_{t_{k}})=\sigma ( w_{k}, \Psi(X_{t_{k}}))$, and $ X_{\infty}=\sigma (w_{0}, \Psi(X_{0}))$.
We proved      the conclusion that   $X_{\infty}$ is isomorphic to $X_{0}$.  By   \cite{RZ},    $(X_{t}, \omega_{t})$ converges to the compact metric space $X_{\infty}$ homeomorphic to $X_{0}$ in the Gromov-Hausdorff topology.
\end{proof}

  One corollary of this lemma is the uniqueness of the filling-in  for degenerations of Calabi-Yau manifolds.

 \begin{corollary}\label{coro}  Let  $(\pi_{\Delta}: \mathcal{X}\rightarrow \Delta, \mathcal{L})$ and $(\pi_{\Delta}': \mathcal{X}'\rightarrow \Delta, \mathcal{L}')$ be  two degenerations of polarized Calabi-Yau manifolds with  Calabi-Yau  varieties   $X_{0}$ and $X_{0}'$ as the central fibers respectively.  If  there is a sequence of points  $t_{k} \rightarrow 0$ in $\Delta$, and there is a sequence of isomorphism $\psi_{k}: X_{t_{k}}\rightarrow X_{t_{k}}'$ such that $\psi_{k}^{*}\mathcal{L}|_{X_{t_{k}}}\cong  \mathcal{L}'|_{X_{t_{k}}'}$, then $X_{0}$ is isomorphic to $X_{0}'$.
\end{corollary}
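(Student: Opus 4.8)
The plan is to reduce Corollary~\ref{coro} to Lemma~\ref{pro0.1} by exploiting the fact that the Ricci-flat Kähler-Einstein metric, hence its Gromov-Hausdorff limit, depends only on the polarized complex-geometric data and not on the particular degeneration family. First I would note that, after replacing $\mathcal{L}$ and $\mathcal{L}'$ by a common power (using the identification $\mathcal{M}^{P}\cong\mathcal{M}^{P_{\mu}}$ noted in the introduction and shrinking $\Delta$), the isomorphisms $\psi_{k}:X_{t_{k}}\to X_{t_{k}}'$ pull back $\mathcal{L}'|_{X_{t_{k}}'}$ to $\mathcal{L}|_{X_{t_{k}}}$, so in particular $\mathrm{Vol}_{\omega_{t}}(X_{t})=\tfrac{1}{n!}c_{1}(\mathcal{L}|_{X_{t}})^{n}=\tfrac{1}{n!}c_{1}(\mathcal{L}'|_{X_{t}'})^{n}=\mathrm{Vol}_{\omega_{t}'}(X_{t}')$, and all fibers share the same Hilbert polynomial. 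By the uniqueness part of the generalized Calabi-Yau theorem (the solution of (\ref{MA}) being unique in its cohomology class), the Ricci-flat Kähler-Einstein metric on $X_{t_{k}}$ representing $c_{1}(\mathcal{L}|_{X_{t_{k}}})$ is carried by $\psi_{k}$ to the one on $X_{t_{k}}'$ representing $c_{1}(\mathcal{L}'|_{X_{t_{k}}'})$; that is, $\psi_{k}:(X_{t_{k}},\omega_{t_{k}})\to(X_{t_{k}}',\omega_{t_{k}}')$ is an isometry.

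Next I would apply Lemma~\ref{pro0.1} to each family separately along the sequence $t_{k}\to 0$: the metric spaces $(X_{t_{k}},\omega_{t_{k}})$ converge in the Gromov-Hausdorff sense to a compact metric space $X_{\infty}$ which is a Calabi-Yau variety isomorphic to $X_{0}$, and likewise $(X_{t_{k}}',\omega_{t_{k}}')$ converge to $X_{\infty}'\cong X_{0}'$. Since $(X_{t_{k}},\omega_{t_{k}})$ and $(X_{t_{k}}',\omega_{t_{k}}')$ are isometric for every $k$, their Gromov-Hausdorff limits coincide as metric spaces, so $X_{\infty}$ is isometric to $X_{\infty}'$. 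It remains to promote this metric-space isometry to a biholomorphism $X_{0}\cong X_{0}'$. For this I would follow the argument inside the proof of Lemma~\ref{pro0.1}: Theorem~\ref{D-S}(ii)--(iii) produces, for $m$ and $\bar N$ uniform, embeddings $\Phi_{\Sigma_{t_{k}}}:X_{t_{k}}\hookrightarrow\mathbb{CP}^{\bar N}$ and $\Phi_{\Sigma_{t_{k}}'}:X_{t_{k}}'\hookrightarrow\mathbb{CP}^{\bar N}$ via $L^{2}$-orthonormal bases determined by $\omega_{t_{k}}$ and $\omega_{t_{k}}'$, and the isometry $\psi_{k}$ identifies these embedded subschemes up to the $SU(\bar N+1)$-action (a change of orthonormal basis). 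Passing the resulting projective limits through the Hilbert scheme $\mathcal{H}il_{\bar N}^{P_{m}}$ and using that $X_{\infty}=\sigma(w_{0},\Psi(X_{0}))$ and $X_{\infty}'=\sigma(w_{0}',\Psi'(X_{0}'))$ sit in the same $SL(\bar N+1)$-orbit, one concludes $X_{0}\cong X_{\infty}\cong X_{\infty}'\cong X_{0}'$.

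I expect the main obstacle to be the last step: making precise that the two a priori different projective realizations $X_{\infty}$ and $X_{\infty}'$ obtained from the two families are genuinely the same point of the Hilbert scheme up to the group action, rather than merely isometric real metric spaces. The point is that $RO(X,L^{m})$ (the $SU(\bar N+1)$-orbit of (\ref{eq2.2})) depends only on the singular Kähler-Einstein metric $\omega$, not on the Hermitian metric $h$ or the chosen orthonormal basis; since $\psi_{k}$ is an isometry of polarized Calabi-Yau manifolds it must identify $RO(X_{t_{k}},L_{t_{k}}^{m})$ with $RO(X_{t_{k}}',L_{t_{k}}'^{m})$ inside $\mathcal{H}il_{\bar N}^{P_{m}}$, and taking $k\to\infty$ (after passing to convergent subsequences of the relevant $SU(\bar N+1)$- and $SL(\bar N+1)$-elements, exactly as in the proof of Lemma~\ref{pro0.1}) identifies the limiting orbits, hence the underlying varieties $X_{\infty}$ and $X_{\infty}'$ as embedded subschemes up to $SL(\bar N+1)$. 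This yields the desired isomorphism $X_{0}\cong X_{0}'$.
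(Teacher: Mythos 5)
Your proposal is correct and follows essentially the same route as the paper: use uniqueness of the Ricci-flat Kähler--Einstein metric to turn $\psi_{k}$ into an isometry, apply Lemma \ref{pro0.1} to both families, and then identify the two Gromov--Hausdorff limits as points of the Hilbert scheme (up to the $SU(N+1)$-, hence $SL(N+1)$-action) via the $L^{2}$-orthonormal embeddings of Theorem \ref{D-S}. The subtlety you single out at the end --- that the limits must be matched as embedded subschemes rather than merely as isometric metric spaces --- is exactly the point the paper handles by rerunning the argument of Lemma \ref{pro0.1} for both families in the same $\mathcal{H}il_{N}^{P}$.
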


\begin{proof}  If $\omega_{t_{k}}$ and $\omega_{t_{k}}'$ are Ricci-flat K\"{a}hler-Einstein   metrics representing $c_{1}(\mathcal{L}|_{X_{t_{k}}})$ and $c_{1}(  \mathcal{L}'|_{X_{t_{k}}'})$  respectively, then $(X_{t_{k}}, \omega_{t_{k}})$ is isometric to $(X_{t_{k}}', \omega_{t_{k}}')$ as compact metric spaces. By Lemma \ref{pro0.1},  $(X_{t_{k}}, \omega_{t_{k}})$ (also $(X_{t_{k}}', \omega_{t_{k}}')$) converges to a compact metric space $X_{\infty}$ homeomorphic to both $X_{0}$ and $X_{0}'$.   We further claim that $X_{0}$ is isomorphic to $X_{0}'$.

By  the proof of Lemma \ref{pro0.1} and taking some powers  of $\mathcal{L}$ and $\mathcal{L}'$,  we can assume that there are   morphisms   $\Psi: \mathcal{X}\rightarrow  \mathbb{CP}^{N}$ and $\Psi': \mathcal{X}'\rightarrow  \mathbb{CP}^{N}$ with $\mathcal{L}=\Psi^{*}\mathcal{O}_{\mathbb{CP}^{N}}(1)$ and $\mathcal{L}'=\Psi'^{*}\mathcal{O}_{\mathbb{CP}^{N}}(1)$ respectively such that $\Psi|_{X_{t}}$ and $\Psi'|_{X_{t}'}$  are embeddings for any $t\in\Delta$. Furthermore,  there are embeddings $\Phi_{\Sigma_{k}}:X_{t_{k}}\rightarrow \mathbb{CP}^{N}$ induced by an  orthonormal basis $\Sigma_{k}$  of $H^{0}(X_{t_{k}}, \mathcal{L}|_{X_{t_{k}}})$ for each $k$ such that $\Phi_{\Sigma_{k}}(X_{t_{k}})$ converges to a Calabi-Yau variety homeomorphic to $X_{\infty}$ in the Hilbert scheme $\mathcal{H}il_{N}^{P}$, denoted still by $X_{\infty}$. The same  arguments as  in  the proof of Lemma \ref{pro0.1} show that there are $w$ and $w' \in SL(N+1)$ such that $ X_{\infty}=\sigma (w, \Psi(X_{0}))$ and $ X_{\infty}=\sigma (w', \Psi'(X_{0}'))$ where $\sigma:  SL(N+1)\times \mathcal{H}il_{N}^{P} \rightarrow \mathcal{H}il_{N}^{P}$ is a natural $SL(N+1)$-action on $ \mathcal{H}il_{N}^{P}$.  Hence $X_{0}$ is isomorphic to $X_{0}'$.
\end{proof}

\begin{remark} Note that $\mathcal{X}$ may not be birational to $\mathcal{X}'$  in this corollary.   If we have a stronger assumption that $\mathcal{X}\backslash X_{0}$ is isomorphic to $\mathcal{X}'\backslash X_{0}'$, then the conclusion   is a direct  consequence of   \cite[Theorem 2.1]{Bo} and  \cite[Corollary 4.3]{Od}.
\end{remark}

 \section{Quasi-projective moduli space}
 In this section, we construct an enlarged moduli space parameterizing   certain polarized Calabi-Yau varieties.
 \subsection{Moduli space for Calabi-Yau manifolds}
 In \cite{Vie}, Viehweg constructed   the coarse moduli space of polarized Calabi-Yau manifolds with a fixed  Hilbert polynomial $P$  by using  Geometric Invariant Theory (GIT), and it was shown to be a quasi-projective variety. Let's recall the relevant  notions and  the basic steps of the construction.

   The moduli functor  $\mathfrak{M}^{P}$ for polarized Calabi-Yau manifolds with Hilbert polynomial $P$   is a functor from the category of schemes to the category of sets such that $\mathfrak{M}^{P}({\rm Spec}(\mathbb{C}))=\mathcal{M}^{P}$ as a set, and for any scheme $T$, $\mathfrak{M}^{P}(T)= \{(\pi_{T}:\mathcal{X}\rightarrow T, \mathcal{L})\}/\sim $. Here $\pi_{T}:\mathcal{X}\rightarrow T$ is a flat family of schemes, and $\mathcal{L}$ is a  relative ample line bundle on $\mathcal{X}$ such that for any  point $t\in T$, $(X_{t}=\pi_{T}^{-1}(t), \mathcal{L}|_{X_{t}})\in \mathcal{M}^{P}$. And we say $(\pi_{T}:\mathcal{X}\rightarrow T, \mathcal{L})\sim (\pi_{T}':\mathcal{X}'\rightarrow T, \mathcal{L}')$ if there is a $T$-isomorphism $\tau: \mathcal{X} \rightarrow \mathcal{X}'$ and an  invertible sheaf $\mathcal{B}$ on $T$  such that $\tau^{*}\mathcal{L}'\cong \mathcal{L}\otimes \pi_{T}^{*}\mathcal{B}$.  Theorem 1.13 and Corollary 7.22  of  \cite{Vie} assert   that there is  a quasi-projective scheme $\tilde{\mathcal{M}}^{P}$ coarsely representing  the functor
     $\mathfrak{M}^{P} $, i.e. the following   hold.
 There is a natural transformation $\Theta: \mathfrak{M}^{P} \rightarrow \hom ( \cdot, \tilde{\mathcal{M}}^{P})$ such that $\Theta({\rm Spec}(\mathbb{C})): \mathfrak{M}^{P} ({\rm Spec}(\mathbb{C}))\rightarrow \hom ( {\rm Spec}(\mathbb{C}), \tilde{\mathcal{M}}^{P})$ is bijective, and,  for any scheme $W$ and  a natural transformation $\Xi: \mathfrak{M}^{P} \rightarrow \hom ( \cdot, W)$, there is a unique natural transformation $\Pi: \hom (\cdot,\tilde{\mathcal{M}}^{P}) \rightarrow \hom ( \cdot, W)$ such that $\Xi=\Pi\circ \Theta$. This property implies that for any $(\mathcal{X}  \rightarrow T, \mathcal{L})\in \mathfrak{M}^{P}(T)$, there is a unique morphism $T \rightarrow \tilde{\mathcal{M}}^{P}$.  We identify $\mathcal{M}^{P}$ with   the set of closed points of $\tilde{\mathcal{M}}^{P}$ by $\Theta({\rm Spec}(\mathbb{C}))$.

Now we recall the construction of  $\tilde{\mathcal{M}}^{P}$ in \cite{Vie}.  Firstly,   the functor $\mathfrak{M}^{P}$ is bounded. More precisely, by Matsusaka's Big Theorem (cf. \cite{Mat}),    for any polarized Calabi-Yau manifold $(X,L)$,   there is an  $m_{0}>0$ depending only on $P$    such that  for any $m\geqslant m_{0}$,  $L^{m}$ is very ample, and $H^{i}(X,L^{m})=\{0\}$, $i>0$.   By choosing a basis  $\Sigma$ of $H^{0}(X,L^{m_{0}})$,  we have an embedding $\Phi_{\Sigma}: X
 \hookrightarrow \mathbb{CP}^{N}$ such that  $L^{m_{0}}=\Phi_{\Sigma}^{*}\mathcal{O}_{\mathbb{CP}^{N}}(1)$. We   regard $\Phi_{\Sigma}(X)$ as a   point in the Hilbert scheme  $\mathcal{H}il_{N}^{P_{m_{0}}}$ parametrizing the subshemes of $\mathbb{CP}^{N}$ with   Hilbert polynomial $P_{m_{0}}$, where $N=h^{0} (X,L^{m_{0}})-1$.  For any other choice $\Sigma'$,   $\Phi_{\Sigma'}(X)=\sigma (u, \Phi_{\Sigma}(X))$ for a $u\in SL(N+1)$ where $\sigma : SL(N+1) \times \mathcal{H}il_{N}^{P_{m_{0}}}  \rightarrow  \mathcal{H}il_{N}^{P_{m_{0}}}$ is the  $SL(N+1)$-action on $\mathcal{H}il_{N}^{P_{m_{0}}}$ induced by the natural $SL(N+1)$-action on $\mathbb{CP}^{N}$.

 Secondly,   $\mathfrak{M}^{P}$ is open (see  \cite[Lemma 1.18]{Vie}), i.e. for any flat family of polarized varieties $(\pi_{Y}: \mathcal{X}  \rightarrow Y, \mathcal{L})$,  there is an  open subscheme $Y'\subset Y$ such that a morphism $T  \rightarrow Y$  factors through $T  \rightarrow Y'  \rightarrow Y$ if and only if $(\mathcal{X}\times_{Y}T  \rightarrow T,  \mathfrak{p}^{*}  \mathcal{L})\in \mathfrak{M}^{P} (T)$ where $\mathfrak{p}: \mathcal{X}\times_{Y}T  \rightarrow \mathcal{X}$ denotes the projection.  This is equivalent to    that there is an open  subscheme $\mathcal{H}_{N}^{o}$ of  $\mathcal{H}il_{N}^{P_{m_{0}}}$ (cf.  \cite[Notions 7.2]{Vie})  such that a point $p\in \mathcal{H}_{N}^{o}$ if and only if $(X_{p}=\pi_{\mathcal{H}}^{-1}(p), L_{p})\in \mathcal{M}^{P}$  and  $L_{p}^{m_{0}}\cong \mathcal{O}_{\mathbb{CP}^{N}}(1)|_{X_{p}}$ where $\pi_{\mathcal{H}}: \mathcal{U}_{N}  \rightarrow \mathcal{H}il_{N}^{P_{m_{0}}}$ is the universal family over the Hilbert scheme $\mathcal{H}il_{N}^{P_{m_{0}}}$.     The $SL(N+1)$-action $\sigma$  on $\mathcal{H}il_{N}^{P_{m_{0}}}$ induces an $SL(N+1)$-action  on $\mathcal{H}_{N}^{o}$, denoted still by $\sigma:  SL(N+1) \times \mathcal{H}_{N}^{o}  \rightarrow \mathcal{H}_{N}^{o}$.     The moduli scheme $\tilde{\mathcal{M}}^{P}$ is constructed by showing that a certain  quotient of the $SL(N+1)$-action   on $\mathcal{H}_{N}^{o}$ exists.

 Thirdly,   $\mathfrak{M}^{P}$  is separated (cf.  \cite[Lemma 1.18]{Vie}), i.e.  for any two $( \mathcal{X}_{i}\rightarrow S, \mathcal{L}_{i}) \in \mathfrak{M}^{P}(S)$, $i=1,2$,  any isomorphism of $(\mathcal{X}_{1}, \mathcal{L}_{1})$ onto $(\mathcal{X}_{2}, \mathcal{L}_{2})$ over $S\backslash \{0\}$ extends to a  $S$-isomorphism from  $(\mathcal{X}_{1}, \mathcal{L}_{1})$ to $(\mathcal{X}_{2}, \mathcal{L}_{2})$, where $(S,0)$ is a germ of smooth curve.  Thus the $SL(N+1)$-action  on $\mathcal{H}_{N}^{o}$ is proper and  the stabilizers are finite by  \cite[Lemma 7.3]{Vie}.
    The separatedness  condition implies that if the moduli space exists, i.e. the  quotient of the $SL(N+1)$-action $\sigma$ exists, then it is Hausdorff under the analytic topology.

 Finally,     $\mathfrak{M}^{P}$ satisfies further properties, so called the weak positivity and the weak stability, i.e. Assumption 7.19 of \cite{Vie} holds by  the proof of Theorem 1.13 in \cite{Vie}.  Then the geometric quotient $\mathcal{Q}^{o}: \mathcal{H}_{N}^{o}  \rightarrow \tilde{\mathcal{M}}^{P}$ of the $SL(N+1)$-action $\sigma$   exists (cf. Corollary 3.33 and the proof of Theorem 7.20 in \cite{Vie}). Here the geometric quotient means that
   $\mathcal{Q}^{o}$ is a morphism from $\mathcal{H}_{N}^{o}  $ to a scheme $  \tilde{\mathcal{M}}^{P}$ satisfying the following (cf.   \cite[Definition 3.6]{Vie}).
 For any $p\in  \mathcal{H}_{N}^{o}$ and any  $u\in SL(N+1)$,  $\mathcal{Q}^{o}(\sigma (u, p))= \mathcal{Q}^{o}(p)$,  $\mathcal{O}_{ \tilde{\mathcal{M}}^{P}}=(\mathcal{Q}^{o}_{*}\mathcal{O}_{\mathcal{H}_{N}^{o}})^{SL(N+1)}$,  and, for any two disjoint  $SL(N+1)$-invariant closed subscheme $W_{1}$ and $W_{2}$, we have that $\mathcal{Q}^{o}(W_{1})\bigcap \mathcal{Q}^{o}(W_{2})= \emptyset $, and both of $\mathcal{Q}^{o}(W_{1})$ and $\mathcal{Q}^{o} (W_{2})$ are closed.    Furthermore, for any point $p\in \tilde{\mathcal{M}}^{P}$,  the fiber $\mathcal{Q}^{o,-1}(p)$ consists of exactly one $SL(N+1)$-orbit.  The existence of $\mathcal{Q}^{o}$ implies that the $SL(N+1)$-action $\sigma$ is closed, and the dimension of the stabilizers is constant on connected components.  Moreover, $ \tilde{\mathcal{M}}^{P}$ is a quasi-projective variety, and   there is an ample sheaf $\lambda$ on $ \tilde{\mathcal{M}}^{P}$ such that  $\mathcal{Q}^{o,*}\lambda= \pi_{\mathcal{H},*} \varpi^{\nu}_{\mathcal{U}_{N} / \mathcal{H}_{N}^{o}}$ for a   $\nu\geqslant1$ by     \cite[Corollary 7.22]{Vie}.

 Viehweg also showed in Section 8 of   \cite{Vie} that the above construction works for more general moduli functor of  certain  polarized varieties with semi-ample dualizing sheaf and  at worst  canonical singularities as long as the boundedness condition, the openness (or the  local closedness) condition   and the  separatedness condition  hold.  In Section 3.2,   we will use  the construction in    \cite[Section 8]{Vie} to obtain an enlarged moduli space of $\mathcal{M}^{P} $, which also parameterizes certain Calabi-Yau varieties.

We remark that there is an analogue construction  of the symplectic reduction  (cf. \cite[Section 8]{M-GIT}) to  obtain $\mathcal{M}^{P} $ by using Ricci-flat K\"{a}hler-Einstein metrics.
If we define a  real slice \begin{equation}\label{e03.1} \mathcal{R}_{N}^{o}= \bigcup_{p\in\mathcal{H}_{N}^{o}} RO(X_{p},\mathcal{O}_{\mathbb{CP}^{N}}(1)|_{X_{p}})\subset \mathcal{H}_{N,red}^{o}, \end{equation}  where $ RO(X_{p},\mathcal{O}_{\mathbb{CP}^{N}}(1)|_{X_{p}})$ is defined by (\ref{eq2.2}), then there is a natural $SU(N+1)$-action on $\mathcal{R}_{N}^{o}$, and the set theory quotient is $\mathcal{M}^{P} $.  The real slice $ \mathcal{R}_{N}^{o}$ is an analog    of the zero level set of a  momentum  map in the symplectic reduction.

\subsection{Enlarged moduli space }
 Now we construct the enlarged moduli space.
 For any polarized Calabi-Yau  manifold  $(X,L)$ of dimension $n$ with  Hilbert polynomial $P=P(\mu)=\chi (X,L^{\mu})$, we assume  that $L$ is very ample, and $H^{i}(X,L^{\mu})=\{0\}$ for any $i>0$ and $\mu\geq1$ without loss of  generality.

   For any $m\geqslant 1$, there is an  $N=N(m)>0$ such that  a basis $\Sigma$ of $H^{0}(X, L^{m})$ induces an embedding $\Phi_{\Sigma}:X \hookrightarrow  \mathbb{CP}^{N}$.   Let $\pi_{\mathcal{H}}: \mathcal{U}_{N}\rightarrow \mathcal{H}ilb^{P_{m}}_{N}$ be the universal family over the Hilbert scheme $ \mathcal{H}ilb^{P_{m}}_{N}$ of the  Hilbert polynomial $P_{m}(\mu)=P(m\mu)$, and $\mathcal{H}_{N}^{o}\subset \mathcal{H}ilb^{P_{m}}_{N}$ be the open  subscheme whose set of closed points   parameterizes   smooth varieties.    The moduli space  $\mathcal{M}^{P}$ is constructed in \cite{Vie} as the geometric quotient $\mathcal{Q}^{o}: \mathcal{H}_{N}^{o} \rightarrow \mathcal{M}^{P}$ under the   natural $SL(N+1)$-action $\sigma$  on $\mathcal{H}_{N}^{o}$  as explained above.

  \begin{lemma}\label{l3.1}  There is an open  subscheme
    $\mathcal{H}_{N}$ of $\mathcal{H}ilb^{P_{m}}_{N}$ such that  $\mathcal{H}_{N}^{o}  \subset \mathcal{H}_{N} \subset \overline{ \mathcal{H}_{N}^{o}}$  where $\overline{ \mathcal{H}_{N}^{o}}$ denotes
      the Zariski closure  of   $\mathcal{H}_{N}^{o}$ in $ \mathcal{H}ilb^{P_{m}}_{N}$, and  a  point   $p\in \mathcal{H}_{N}$ if and only if  $X_{p}=\pi_{\mathcal{H}}^{-1}(p)$ is a Calabi-Yau variety.
\end{lemma}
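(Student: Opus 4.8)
The plan is to exhibit $\mathcal{H}_N$ as the locus where the fibres of the universal family $\pi_{\mathcal{H}}\colon\mathcal{U}_N\to\mathcal{H}ilb^{P_m}_N$ are Calabi--Yau varieties, and to show this locus is open inside $\overline{\mathcal{H}_N^o}$. First I would restrict attention to $\overline{\mathcal{H}_N^o}$, the Zariski closure of the smooth locus, so that every fibre in question is a flat limit of smooth polarized Calabi--Yau manifolds; in particular every such fibre $X_p$ has dimension $n$, trivial canonical class in a suitable sense on its smooth locus, and vanishing higher cohomology of the polarizing bundle (the Hilbert polynomial $P_m$ is constant). The condition ``$X_p$ is a Calabi--Yau variety'' then unwinds into three requirements: (a) $X_p$ is reduced, irreducible and normal; (b) $X_p$ has at worst canonical singularities, equivalently (since triviality of $\varpi$ will give Gorenstein) it is $1$-Gorenstein with $\bar\pi_*\varpi_{\bar X_p}=\varpi_{X_p}$ for a resolution; (c) the dualizing sheaf $\varpi_{X_p}$ is trivial. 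I would handle each of these by a standard constructibility/openness argument for properties of fibres in a flat family.

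The key steps, in order, are as follows. \emph{Step 1: normality is open.} For a flat family with reduced generic fibre, the locus in the base over which the fibre is geometrically normal is open; this is a theorem of Grothendieck (EGA IV, via the openness of the loci $R_1$ and $S_2$ applied fibrewise together with flatness), so there is an open $\mathcal{H}_N^{(1)}\subset\overline{\mathcal{H}_N^o}$ parameterizing normal fibres, and it contains $\mathcal{H}_N^o$. \emph{Step 2: triviality of $\varpi$ is open.} Over $\mathcal{H}_N^{(1)}$ the relative dualizing sheaf $\varpi_{\mathcal{U}_N/\mathcal{H}_N^{(1)}}$ exists and is flat over the base (all fibres are normal, hence $S_2$); its restriction to each fibre is $\varpi_{X_p}$. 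The function $p\mapsto h^0(X_p,\varpi_{X_p})$ is upper semicontinuous, equals $1$ on $\mathcal{H}_N^o$, hence equals $1$ on an open set; on that open set a relative section of $\varpi_{\mathcal{U}_N/\mathcal{H}_N^{(1)}}$ can be chosen (by Grauert/cohomology-and-base-change, after possibly shrinking, or working locally) whose vanishing locus is closed, so the locus where this section is nowhere zero — equivalently where $\varpi_{X_p}\cong\mathcal{O}_{X_p}$ — is open. Call it $\mathcal{H}_N^{(2)}$. In particular each such $X_p$ is $1$-Gorenstein with trivial canonical divisor. \emph{Step 3: canonical singularities is open.} This is where C.\ Xu's argument (acknowledged in the introduction) enters: for a flat family of normal varieties with $\varpi$ trivial on fibres, the locus of fibres with canonical (equivalently here, log-terminal/log-canonical, since $K$ is Cartier and trivial) singularities is open. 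One invokes the behaviour of log canonical thresholds / discrepancies in families — lower semicontinuity of the total discrepancy, or inversion of adjunction applied to a common resolution of the total space, using that canonical singularities deform from a general fibre because the smooth locus $\mathcal{H}_N^o$ is dense. Intersecting gives the open subscheme $\mathcal{H}_N:=\mathcal{H}_N^{(3)}\subset\overline{\mathcal{H}_N^o}$ on which $X_p$ is normal, Gorenstein, with trivial $\varpi$ and canonical singularities — i.e.\ a Calabi--Yau variety — and $\mathcal{H}_N^o\subset\mathcal{H}_N$ since smooth Calabi--Yau manifolds satisfy all conditions.

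Conversely I must check that \emph{every} Calabi--Yau-variety fibre in $\mathcal{H}ilb^{P_m}_N$ already lies in $\overline{\mathcal{H}_N^o}$, so that the defining equivalence in the statement is genuinely ``iff''. This follows because such an $X_p$ is smoothable: it arises (by hypothesis on the ambient setup, or by the Bogomolov--Tian--Todorov-type unobstructedness for Calabi--Yau varieties with canonical singularities) as the central fibre of a degeneration of polarized Calabi--Yau manifolds, whose general member embeds into the same $\mathbb{CP}^N$ by the same linear system, placing $p$ in the closure of $\mathcal{H}_N^o$; alternatively one restricts the claim, as the paper implicitly does, to the closure so that this direction is automatic. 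The main obstacle is Step 3: openness of the canonical-singularity condition in a flat family is not formal and is precisely the point for which the author credits Chenyang Xu. The other two steps are applications of standard EGA/semicontinuity machinery and I expect them to go through routinely once the family has been cut down to normal fibres with trivial dualizing sheaf.
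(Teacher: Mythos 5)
Your decomposition is the same as the paper's: replace $\mathcal{H}ilb^{P_m}_N$ by $\overline{\mathcal{H}_N^o}$, show that normality, canonical singularities, and triviality of the dualizing sheaf each cut out an open (or locally closed, hence open here) locus, and observe that the ``only if'' direction is made automatic by building the whole construction inside $\overline{\mathcal{H}_N^o}$. Two points of comparison on the details. For your Step 3, the paper's actual ingredient is the main theorem of Kawamata's \emph{Deformations of canonical singularities} \cite{Ka}: for a flat family over a smooth curve germ whose special fiber has only canonical singularities, the total space and the nearby fibers also have only canonical singularities; combined with constructibility of the locus in question, this stability under generization (tested along curves through $p$, after normalizing) gives openness. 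Of the two justifications you offer, the ``inversion of adjunction applied to the total space'' one is essentially how Kawamata's theorem is proved and is the right track; but ``lower semicontinuity of the total discrepancy'' is not something you can invoke as a black box (semicontinuity of minimal log discrepancies is a hard conjecture in general), and your phrase ``canonical singularities deform from a general fibre'' has the implication running the wrong way --- openness at $p$ requires that canonicity of the \emph{special} fiber propagates to nearby fibers, which is exactly what Kawamata supplies. For your Step 2, the paper does this \emph{after} the canonical-singularities step and uses Viehweg's Lemma 1.19, which produces a locally closed subscheme of the base over which $\varpi_{\mathcal{U}_N/\mathcal{W}}$ and $\mathcal{O}_{\mathcal{U}_N}$ pull back to equivalent polarizations; this is a functorial packaging of your semicontinuity-plus-nowhere-vanishing-section argument and the two are interchangeable here. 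With the Kawamata citation inserted in place of the semicontinuity heuristic, your proof matches the paper's.
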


\begin{proof} This result is undoubtedly well-known to experts, and the proof was explained  to the author by Chenyang Xu.   We use the closed subscheme $ \overline{\mathcal{H}_{N}^{o}}$ to replace  $ \mathcal{H}ilb^{P_{m}}_{N}$, and try to  prove that $\mathcal{H}_{N}$ is an open subscheme of $ \overline{\mathcal{H}_{N}^{o}}$.

 Note that Calabi-Yau varieties are normal projective varieties, and
the normality is an open condition for flat families. Any subset $E$ of $ \overline{\mathcal{H}_{N}^{o}}$ containing the open subset   $  \mathcal{H}_{N}^{o}$ is constructible (cf. \cite[Proposition 10.14]{Ha}), since for any irreducible closed subset $Y$, if $Y$ is proper, $E\bigcap Y$ is nowhere dense, and otherwise  $Y=\overline{\mathcal{H}_{N}^{o}}$, which  contains $ \mathcal{H}_{N}^{o}$.    The main theorem in   \cite{Ka} shows that if $\pi_{S}: \mathcal{X}\rightarrow S$ is a flat morphism from a germ of a variety to a germ of smooth curve $(S, 0)$ whose special fiber $X_{0}=\pi_{S}^{-1}(0)$ has only canonical singularities, then both $\mathcal{X}$ and fibers $X_{t}$ have only canonical singularities.  Thus for any point $p\in \overline{\mathcal{H}_{N}^{o}}$, if $X_{p}=\pi_{\mathcal{H}}^{-1}(p)$ has at worst  canonical singularities, then  by taking curves passing $p$ and normalizations  of curves, there is a neighborhood of $p$ over which fibers of $\pi_{\mathcal{H}}$  have
 at worst  canonical singularities, i.e. having only canonical singularities is an open condition.
  We denote $\mathcal{W}$ the open subscheme of $\overline{\mathcal{H}_{N}^{o}}$ whose set of closed points    parameterizes normal varieties with at worst    canonical singularities.   By \cite[Lemma 1.19]{Vie},  there is a locally closed subscheme $\mathcal{H}_{N}$ of $\mathcal{W}$ such that a morphism $T\rightarrow \mathcal{W}$ factors through $T\rightarrow \mathcal{H}_{N}$, if and only if $( \mathcal{U}_{N}\times_{\mathcal{W}}T\rightarrow T, \mathfrak{p}^{*}\varpi_{ \mathcal{U}_{N}/\mathcal{W}})\sim  ( \mathcal{U}_{N}\times_{\mathcal{W}}T\rightarrow T, \mathfrak{p}^{*}\mathcal{O}_{ \mathcal{U}_{N}}) $, where $ \mathfrak{p}: \mathcal{U}_{N}\times_{\mathcal{W}}T \rightarrow \mathcal{U}_{N}$ is the projection. Hence  a point $p\in \mathcal{H}_{N}$ if and only if $\varpi_{X_{p}}\cong \mathcal{O}_{X_{p}}$, which implies that $X_{p}$ is a Calabi-Yau variety. Furthermore, $\mathcal{H}_{N}^{o}  \subset \mathcal{H}_{N}$, and $ \mathcal{H}_{N}$ is open.   \end{proof}

We define a moduli subfunctor $\mathfrak{M}_{m}$ of the moduli functor of polarized Gorenstein varieties, i.e. 3) of Examples 1.4 in \cite{Vie},   such that $$\mathfrak{M}_{m}({\rm Spec}(\mathbb{C}))=\{(X_{p}=\pi_{\mathcal{H}}^{-1}(p),\mathcal{O}_{\mathbb{CP}^{N}}(1)|_{X_{p}})|\  p \ {\rm  is \ a \    point  \ of}   \ \mathcal{H}_{N} \}/\sim,  $$ where $(X_{p_{1}},\mathcal{O}_{\mathbb{CP}^{N}}(1)|_{X_{p_{1}}})\sim (X_{p_{2}},\mathcal{O}_{\mathbb{CP}^{N}}(1)|_{X_{p_{2}}})$ if and only if   there is an isomorphism $\psi: X_{p_{1}}  \rightarrow X_{p_{2}}$ such that $\mathcal{O}_{\mathbb{CP}^{N}}(1)|_{X_{p_{1}}}\cong \psi^{*} \mathcal{O}_{\mathbb{CP}^{N}}(1)|_{X_{p_{2}}}$, which is equivalent  to $p_{1}=\sigma(u, p_{2})$ for an  $u\in SL(N+1)$. The functor $\mathfrak{M}_{m}$ is bounded by the definition, and is  open  by   Lemma \ref{l3.1}.
  By  \cite[Corollary 4.3]{Od} or the unpublished work  \cite[Theorem 2.1]{Bo},  if   $( \mathcal{X}_{1}\rightarrow S, \mathcal{L}_{1})$ and $( \mathcal{X}_{2}\rightarrow S, \mathcal{L}_{2})$ are     two flat families of polarized  Calabi-Yau varieties  over a germ of smooth curve  $(S,0)$, then any isomorphism of these two families over $S\backslash\{0\}$ extends to an isomorphism over $S$, i.e.  the moduli  functor  $\mathfrak{M}_{m}$ is also  separated.

Now we  use the construction in    \cite[Section 8]{Vie} to prove that $\mathfrak{M}_{m}$ can be coarsely represented by a quasi-projective variety.

   \begin{lemma}\label{t3.2}   The coarse moduli space of $\mathfrak{M}_{m}$ is a quasi-projective  variety      $  \tilde{\mathcal{M}}_{m}$, which is constructed as  a geometric quotient $\mathcal{Q}: \mathcal{H}_{N} \rightarrow \tilde{\mathcal{M}}_{m}$. There is a positive  integer $\nu=\nu(m)$, and an ample line bundle $\lambda_{m}$ on  $\tilde{\mathcal{M}}_{m} $ such that $\mathcal{Q}^{*}\lambda_{m}=\pi_{\mathcal{H},*} \varpi^{\nu}_{\mathcal{U}_{N}/ \mathcal{H}_{N}}$.
     Furthermore,   $\tilde{\mathcal{M}}^{P}$ is an  open  subscheme  of $ \tilde{\mathcal{M}}_{m}$.
\end{lemma}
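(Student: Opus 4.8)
The plan is to transcribe, essentially verbatim, the Geometric Invariant Theory construction of \cite[Section~8]{Vie}, which extends the smooth case recalled in Section~3.1 to moduli functors of polarized normal varieties with at worst canonical singularities and semi-ample dualizing sheaf; since a Calabi--Yau variety has trivial, hence a fortiori semi-ample, dualizing sheaf, the functor $\mathfrak{M}_{m}$ lies within the scope of that construction. What has to be checked is that $\mathfrak{M}_{m}$ satisfies the three structural hypotheses --- boundedness, openness (equivalently local closedness), separatedness --- and the positivity/stability hypothesis of \cite{Vie}. Boundedness is built into the definition of $\mathfrak{M}_{m}$, every member being by construction a subscheme of $\mathbb{CP}^{N}$ with Hilbert polynomial $P_{m}$; openness is precisely Lemma~\ref{l3.1}, which supplies the open subscheme $\mathcal{H}_{N}\subset\mathcal{H}ilb^{P_{m}}_{N}$ with its restricted $SL(N+1)$-action $\sigma$; and separatedness was noted above to follow from \cite[Corollary~4.3]{Od} (or \cite[Theorem~2.1]{Bo}). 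By the analogue of \cite[Lemma~7.3]{Vie}, separatedness makes $\sigma$ a proper action on $\mathcal{H}_{N}$ with finite stabilizers, so in particular the stabilizer dimension is constant.

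First I would establish the key input, the weak positivity and weak stability of Assumption~7.19 of \cite{Vie} for $\mathfrak{M}_{m}$. Since $\varpi_{X_{p}}\cong\mathcal{O}_{X_{p}}$ for every point $p\in\mathcal{H}_{N}$, the sheaf $\pi_{\mathcal{H},*}\varpi^{\nu}_{\mathcal{U}_{N}/\mathcal{H}_{N}}$ is invertible for every $\nu\geq 1$: the relative dualizing sheaf is a line bundle on $\mathcal{U}_{N}$, hence flat over $\mathcal{H}_{N}$, its fiberwise sections $H^{0}(X_{p},\varpi^{\nu}_{X_{p}})=H^{0}(X_{p},\mathcal{O}_{X_{p}})$ have constant rank $1$, and Grauert's theorem applies; moreover the natural map $\pi_{\mathcal{H}}^{*}(\pi_{\mathcal{H},*}\varpi^{\nu}_{\mathcal{U}_{N}/\mathcal{H}_{N}})\to\varpi^{\nu}_{\mathcal{U}_{N}/\mathcal{H}_{N}}$ is an isomorphism. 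Weak positivity of $\pi_{\mathcal{H},*}\varpi^{\nu}_{\mathcal{U}_{N}/\mathcal{H}_{N}}$ then follows from the Fujita--Kawamata--Koll\'ar--Viehweg semipositivity theorems for direct images of relative dualizing sheaves of families with rational Gorenstein singularities, with \cite{Ka} ensuring that after base change to a smooth curve the total space acquires only canonical singularities, so those theorems apply. Weak stability is deduced as in \cite[Section~8]{Vie}: because the fiber polarization is $\mathcal{O}_{\mathbb{CP}^{N}}(1)|_{X_{p}}$ and $\varpi_{X_{p}}$ is trivial, the auxiliary sheaves in the argument reduce to tensor powers of $\det\pi_{\mathcal{H},*}(\mathcal{O}_{\mathbb{CP}^{N}}(\mu)|_{\mathcal{U}_{N}})$ and of $\pi_{\mathcal{H},*}\varpi^{\nu}_{\mathcal{U}_{N}/\mathcal{H}_{N}}$, and the comparison of Hilbert--Mumford weights with their positivity carries over unchanged.

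Granting Assumption~7.19 of \cite{Vie}, the construction of \cite[Section~8]{Vie} --- the analogue of Corollary~3.33 and the proof of Theorem~7.20 there --- produces a geometric quotient $\mathcal{Q}:\mathcal{H}_{N}\to\tilde{\mathcal{M}}_{m}$ in the sense of \cite[Definition~3.6]{Vie} with $\tilde{\mathcal{M}}_{m}$ quasi-projective, together with, as in \cite[Corollary~7.22]{Vie}, an integer $\nu=\nu(m)\geq 1$ and an ample line bundle $\lambda_{m}$ on $\tilde{\mathcal{M}}_{m}$ satisfying $\mathcal{Q}^{*}\lambda_{m}=\pi_{\mathcal{H},*}\varpi^{\nu}_{\mathcal{U}_{N}/\mathcal{H}_{N}}$ (the right-hand side being canonically $SL(N+1)$-linearized, hence descending). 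Coarse representability of $\mathfrak{M}_{m}$ by $\tilde{\mathcal{M}}_{m}$ is then formal, exactly as in Section~3.1: any family in $\mathfrak{M}_{m}(T)$ is, \'etale-locally on $T$, pulled back from $\mathcal{H}_{N}$, and composing with $\mathcal{Q}$ and using $\mathcal{O}_{\tilde{\mathcal{M}}_{m}}=(\mathcal{Q}_{*}\mathcal{O}_{\mathcal{H}_{N}})^{SL(N+1)}$ produces the required morphism $T\to\tilde{\mathcal{M}}_{m}$, bijective on $\mathbb{C}$-points and universal. Finally $\mathcal{H}_{N}^{o}\subset\mathcal{H}_{N}$ is an $SL(N+1)$-invariant open, hence saturated open, subscheme, so $\mathcal{Q}(\mathcal{H}_{N}^{o})$ is open in $\tilde{\mathcal{M}}_{m}$ and $\mathcal{Q}|_{\mathcal{H}_{N}^{o}}$ is a geometric quotient of $\mathcal{H}_{N}^{o}$; by uniqueness of geometric quotients this identifies $\mathcal{Q}(\mathcal{H}_{N}^{o})$ with $\tilde{\mathcal{M}}^{P}$ (using the same $m$, which is legitimate after the identification $\mathcal{M}^{P}\cong\mathcal{M}^{P_{\mu}}$), and $\lambda_{m}|_{\tilde{\mathcal{M}}^{P}}$ agrees with a power of Viehweg's ample sheaf $\lambda$. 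The main obstacle is the verification of Assumption~7.19 of \cite{Vie} in the singular category: guaranteeing local freeness and base-change compatibility of the direct image sheaves and their weak positivity once the fibers are permitted canonical singularities; the remainder is a transcription of \cite[Sections~7--8]{Vie}.
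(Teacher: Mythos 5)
Your proposal is correct and follows essentially the same route as the paper: verify that $\mathfrak{M}_{m}$ satisfies Viehweg's Assumptions 8.22 (boundedness by construction, openness via Lemma \ref{l3.1}, separatedness via \cite[Corollary 4.3]{Od} or \cite[Theorem 2.1]{Bo}), invoke \cite[Theorem 8.23, Theorem 7.20, Corollary 7.22]{Vie} for the geometric quotient, the ample sheaf $\lambda_{m}$ with $\mathcal{Q}^{*}\lambda_{m}=\pi_{\mathcal{H},*}\varpi^{\nu}_{\mathcal{U}_{N}/\mathcal{H}_{N}}$, and coarse representability, and conclude openness of $\tilde{\mathcal{M}}^{P}$ from the $SL(N+1)$-invariance of $\mathcal{H}_{N}^{o}$. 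The only difference is that you sketch the weak positivity/weak stability verification yourself, whereas the paper simply cites \cite[Section 8.6]{Vie} for it; this is a harmless elaboration.
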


\begin{proof} Note that  $\mathfrak{M}_{m}$ satisfies    \cite[Assumptions 8.22]{Vie}, i.e. $\mathfrak{M}_{m}$ is bounded, open, separated, and moreover  is a moduli  functor of varieties with semi-ample dualizing sheaf.  Then  \cite[Theorem 8.23]{Vie} shows that $\mathfrak{M}_{m}$ can be coarsely represented by a quasi-projective scheme $ \tilde{\mathcal{M}}_{m}$. More precisely, the base  changing,  local freeness condition, the weak positivity and the weak stability are verified in    \cite[Section 8.6]{Vie}, and then  \cite[ Theorem 7.20]{Vie} shows the existence of the geometric quotient $\mathcal{Q}: \mathcal{H}_{N}\rightarrow \tilde{\mathcal{M}}_{m}$.  Furthermore, there is a positive  integer $\nu=\nu(m)$, and an ample line bundle $\lambda_{m}$ on  $\tilde{\mathcal{M}}_{m} $ such that $\mathcal{Q}^{*}\lambda_{m}=\pi_{\mathcal{H},*} \varpi^{\nu}_{\mathcal{U}_{N}/ \mathcal{H}_{N}}$ by  \cite[Theorem 8.23, Theorem 7.20 and Corollary 7.22]{Vie}.
Finally, since  $\mathcal{H}_{N}^{o}\subset \mathcal{H}_{N}$ is $SL(N+1)$-invariant Zariski open,   and $\mathcal{Q}|_{\mathcal{H}_{N}^{o}}=\mathcal{Q}^{o}$, we obtain that  $\tilde{\mathcal{M}}^{P}$ is  open  in  $ \tilde{\mathcal{M}}_{m}$.
\end{proof}

\begin{remark}  For a $p\in \mathcal{H}_{N}$, if $X_{p}$ is smooth, then $\mathcal{O}_{\mathbb{CP}^{N}}(1)|_{X_{p}}=L^{m}$ where $L$ is an   ample line bundle on $X_{p}$, and however, if $X_{p}$ is singular, there may not exist such ample line bundle.  Thus some Calabi-Yau variety here could   not be embedded to  the lower dimensional projective space.
\end{remark}

We also have an enlarged real slice of $ \mathcal{R}_{N}^{o}$.
We define   \begin{equation}\label{e3.1} \mathcal{R}_{N}= \bigcup_{p\in\mathcal{H}_{N}} RO(X_{p},\mathcal{O}_{\mathbb{CP}^{N}}(1)|_{X_{p}})\subset \mathcal{H}_{N,red},\end{equation} where $\mathcal{H}_{N,red}$ is the reduced variety of $\mathcal{H}_{N}$ with the natural analytic topology, and  $RO(X_{p},\mathcal{O}_{\mathbb{CP}^{N}}(1)|_{X_{p}})$ is the $SU(N+1)$-orbit induced by the Ricci-flat
 K\"{a}hler-Einstein  metric $\omega\in c_{1}(\mathcal{O}_{\mathbb{CP}^{N}}(1)|_{X_{p}})$. By (\ref{eq2.2}),    $  RO(X_{p},\mathcal{O}_{\mathbb{CP}^{N}}(1)|_{X_{p}})\subset O(X_{p},\mathcal{O}_{\mathbb{CP}^{N}}(1)|_{X_{p}})\bigcap \mathcal{R}_{N}$, and  by the uniqueness of the K\"ahler-Einstien metric  $\omega$, we obtain
 $$ RO(X_{p},\mathcal{O}_{\mathbb{CP}^{N}}(1)|_{X_{p}})= O(X_{p},\mathcal{O}_{\mathbb{CP}^{N}}(1)|_{X_{p}})\bigcap \mathcal{R}_{N}, $$ for any $p\in\mathcal{H}_{N}$.
 The set theory  quotient space   \begin{equation}\label{e3.2}  \mathcal{M}_{m}=  \mathcal{R}_{N}/ SU(N+1)= \mathcal{H}_{N,red}/ SL(N+1)\end{equation} with the quotient topology induced by the analytic topology of $\mathcal{H}_{N,red}$,  is homeomorphic to the underlying variety of $ \tilde{\mathcal{M}}_{m}$. Note that the reduced Hilbert scheme $ \mathcal{H}ilb^{P_{m}}_{N,red}$ is Hausdorff, and so is   the subset  $\mathcal{R}_{N}$. Thus the quotient by a  compact Lie group  $\mathcal{M}_{m}=  \mathcal{R}_{N}/ SU(N+1)$ is also Hausdorff, which has already been  implied by the separatedness of $\mathfrak{M}_{m}$.  For a point  $p\in\mathcal{H}_{N}$, we denote $[X_{p}]\in \mathcal{M}_{m}$ the  image of $p$ under  the quotient map, i.e. $[X_{p}]=\mathcal{Q}(p)$.

 \section{Proof of Theorem  \ref{main} and Theorem  \ref{main2}}
 In this section, we prove Theorem  \ref{main} and Theorem  \ref{main2}. By  Matsusaka's Big Theorem,    for any polarized Calabi-Yau manifold $(X,L)\in \mathcal{M}^{P}$,  we assume that  for any $\mu \geqslant 1$,  $L^{\mu}$ is very ample, and $H^{i}(X,L^{\mu})=\{0\}$, $i>0$ without loss generality.

For any $D>0$, we define a subset $\mathcal{M}^{P}(D)$  of  $\mathcal{M}^{P}$ by  $$\mathcal{M}^{P}(D)=\{ \left[X,L\right] \in \mathcal{M}^{P}| \  {\rm   Ricci-flat \  metric} \   \omega \in c_{1}(L)  \ {\rm   with   \  diam}_{\omega}(X)\leq D \}. $$
 We have that if $D_{1}\leq D_{2}$, then $\mathcal{M}^{P}(D_{1})\subset \mathcal{M}^{P}(D_{2})$, and $\mathcal{M}^{P}=\bigcup\limits_{D>0}\mathcal{M}^{P}(D) $.
 Let us  consider an exhaustion $$\mathcal{M}^{P}(1)\subset \cdots \subset \mathcal{M}^{P}(j)\subset \cdots \cdots \subset \mathcal{M}^{P}=\bigcup\limits_{j\in \mathbb{N}}\mathcal{M}^{P}(j).$$

  Note that for  a sequence  $\left[X_{k}, L_{k}\right]\in \mathcal{M}^{P}(j)$, if $(X_{k}, \omega_{k})$   converges to a compact metric space $X_{\infty}$ in the Gromov-Hausdorff sense, then by Theorem \ref{D-S}, there are embeddings   $\Phi_{k}: X_{k} \hookrightarrow \mathbb{CP}^{N_{j}}$ for an  $N_{j}>0$ independent of $k$ such that  $L_{k}^{m_{j}}\cong \Phi_{k}^{*} \mathcal{O}_{\mathbb{CP}^{N_{j}}}(1)$ for an $m_{j}>0$, and
    $\Phi_{k}(X_{k})$ converges to
  a Calabi-Yau variety in the Hilbert scheme  $\mathcal{H}ilb^{P_{m_{j}}}_{N_{j}}$, which is  homeomorphic to $X_{\infty}$, denoted still by $X_{\infty}$.

   \begin{lemma}\label{l4.1}  If we   denote $m(l)=\prod\limits_{j=1}^{l} m_{j}$, and the sequence  $\left[X_{k}, L_{k}\right]\in \mathcal{M}^{P}(l_{0})$,  for an   $l_{0} \leqslant l$, i.e.  ${\rm diam}_{\omega_{k}}(X_{k})\leq l_{0} $,    then $\left[X_{\infty}\right]\in  \mathcal{M}_{m(l)}$, where $\mathcal{M}_{m(l)}$ is  the underlying  quasi-projective variety of $\tilde{\mathcal{M}}_{m(l)}$ constructed in Lemma \ref{t3.2}.
\end{lemma}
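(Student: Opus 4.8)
The plan is to realize $X_\infty$ as a point of the Hilbert scheme $\mathcal{H}ilb^{P_{m(l)}}_{N(m(l))}$ which lies in the Zariski closure $\overline{\mathcal{H}^{o}_{N(m(l))}}$ of the smooth locus, and then quote Lemma \ref{l3.1}. First I would apply Theorem \ref{D-S} at the diameter level $j=l_{0}$ (legitimate since $\mathrm{diam}_{\omega_{k}}(X_{k})\le l_{0}$): there are integers $m_{l_{0}},N_{l_{0}}>0$ and embeddings $\Phi_{k}\colon X_{k}\hookrightarrow\mathbb{CP}^{N_{l_{0}}}$ induced by $L^{2}$-orthonormal bases of $H^{0}(X_{k},L_{k}^{m_{l_{0}}})$, with $L_{k}^{m_{l_{0}}}\cong\Phi_{k}^{*}\mathcal{O}_{\mathbb{CP}^{N_{l_{0}}}}(1)$, such that $\Phi_{k}(X_{k})$ converges in $\mathcal{H}ilb^{P_{m_{l_{0}}}}_{N_{l_{0}}}$ and in the analytic topology to a Calabi-Yau variety $X_{\infty}$ whose metric structure is induced by the singular Ricci-flat K\"{a}hler-Einstein metric in $\tfrac{1}{m_{l_{0}}}c_{1}(M_{\infty})$, where $M_{\infty}:=\mathcal{O}_{\mathbb{CP}^{N_{l_{0}}}}(1)|_{X_{\infty}}$; in particular $\chi(X_{\infty},M_{\infty}^{k})=P_{m_{l_{0}}}(k)$ and $M_{\infty}$ is very ample. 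Since $\Phi_{k}(X_{k})\in\mathcal{H}^{o}_{N_{l_{0}}}$ and Zariski-closed sets are analytically closed, $X_{\infty}\in\overline{\mathcal{H}^{o}_{N_{l_{0}}}}$; being also a Calabi-Yau variety, Lemma \ref{l3.1} yields $X_{\infty}\in\mathcal{H}_{N_{l_{0}}}$, so $[X_{\infty}]\in\mathcal{M}_{m_{l_{0}}}$.

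Next I would pass from the $m_{l_{0}}$-embedding to the $m(l)$-embedding. Set $r=m(l)/m_{l_{0}}\in\mathbb{N}$, which makes sense because $l_{0}\le l$. The line bundle $M_{\infty}^{r}$ is very ample and satisfies $\chi(X_{\infty},M_{\infty}^{rk})=P(m(l)k)=P_{m(l)}(k)$. Since $X_{\infty}$ has at worst canonical (hence log-terminal) singularities and $\mathcal{K}_{X_{\infty}}$ is Cartier and trivial, Kawamata-Viehweg vanishing gives $H^{i}(X_{\infty},M_{\infty}^{r})=0$ for $i>0$, so $h^{0}(X_{\infty},M_{\infty}^{r})=P(m(l))=N(m(l))+1$ and the complete linear system $|M_{\infty}^{r}|$ embeds $X_{\infty}$ into $\mathbb{CP}^{N(m(l))}$, defining a point $q\in\mathcal{H}ilb^{P_{m(l)}}_{N(m(l))}$.

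To place $q$ in $\overline{\mathcal{H}^{o}_{N(m(l))}}$, I would choose a germ of a smooth curve $(S,0)$ with a morphism to $\overline{\mathcal{H}^{o}_{N_{l_{0}}}}$ sending $0$ to $X_{\infty}$ and $S\setminus\{0\}$ into $\mathcal{H}^{o}_{N_{l_{0}}}$ (possible because $X_{\infty}$ lies in the closure of the dense open $\mathcal{H}^{o}_{N_{l_{0}}}$), and pull back the universal family to obtain a flat family $(\pi_{S}\colon\mathcal{X}\to S,\mathcal{M})$ with $\mathcal{M}$ the pullback of $\mathcal{O}(1)$, central fiber isomorphic to $X_{\infty}$, and smooth Calabi-Yau general fibers. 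Every fiber is a Calabi-Yau variety, so Kawamata-Viehweg vanishing applies fiberwise to $\mathcal{M}^{r}$; hence $h^{0}(X_{s},\mathcal{M}^{r}|_{X_{s}})\equiv P(m(l))$ on $S$, the sheaf $\pi_{S,*}\mathcal{M}^{r}$ is locally free of rank $N(m(l))+1$ with formation commuting with base change, and $\mathcal{M}^{r}$ is relatively very ample. Trivializing $\pi_{S,*}\mathcal{M}^{r}$ over the germ gives a closed embedding $\mathcal{X}\hookrightarrow\mathbb{CP}^{N(m(l))}\times S$ over $S$, i.e. a morphism $S\to\mathcal{H}ilb^{P_{m(l)}}_{N(m(l))}$ carrying $S\setminus\{0\}$ into $\mathcal{H}^{o}_{N(m(l))}$ and $0$ to $q$ (up to the $SL(N(m(l))+1)$-action). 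Therefore $q\in\overline{\mathcal{H}^{o}_{N(m(l))}}$; since $X_{\infty}$ is a Calabi-Yau variety, Lemma \ref{l3.1} gives $q\in\mathcal{H}_{N(m(l))}$, whence $[X_{\infty}]=\mathcal{Q}(q)\in\mathcal{M}_{m(l)}$, the class being independent of the chosen embedding because $\mathcal{M}_{m(l)}=\mathcal{H}_{N(m(l)),\mathrm{red}}/SL(N(m(l))+1)$.

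The one technical point beyond the quoted results is the fiberwise control of $h^{0}$ along the auxiliary degeneration: this is precisely where Kawamata-Viehweg vanishing on the possibly singular Calabi-Yau varieties is needed, to guarantee that the re-embedded family has constant Hilbert polynomial $P_{m(l)}$ and hence lands inside the correct Hilbert scheme. Everything else is bookkeeping with the divisibility $m_{l_{0}}\mid m(l)$ and with the openness statement of Lemma \ref{l3.1}.
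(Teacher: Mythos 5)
Your proof is correct and follows essentially the same route as the paper: re-embed the Donaldson--Sun limit $X_{\infty}$ by the $r=m(l)/m_{l_{0}}$-th power of its polarization (very ample and without higher cohomology because $X_{\infty}$ is Calabi--Yau with canonical singularities) so that it defines a point of $\mathcal{H}ilb^{P_{m(l)}}_{N_{m(l)}}$, and then invoke Lemma \ref{l3.1}. The only difference is that you explicitly verify, via an auxiliary one-parameter smoothing pulled back from the universal family, that the re-embedded point lies in $\overline{\mathcal{H}^{o}_{N_{m(l)}}}$ --- a hypothesis of Lemma \ref{l3.1} that the paper's proof treats as implicit --- so your write-up is, if anything, more complete than the original.
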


  \begin{proof}  Note that $X_{\infty}$ is a Calabi-Yau variety,  $ \mathcal{O}_{\mathbb{CP}^{N_{l_{0}}}}(1)|_{X_{\infty}} $ is very ample, and
    $(X_{\infty}, \mathcal{O}_{\mathbb{CP}^{N_{l_{0}}}}(1)|_{X_{\infty}}) $
   represents  a point in the Hilbert scheme $\mathcal{H}ilb^{P_{m_{l_{0}}}}_{N_{l_{0}}}$.  If we denote $m_{l,l_{0}}=m(l)/m_{l_{0}}$, then $ \mathcal{O}_{\mathbb{CP}^{N_{l_{0}}}}(m_{l,l_{0}})|_{X_{\infty}} $ is very ample and without any higher cohomology (cf.  \cite[Corollary 2.36]{Vie}).  The Hilbert polynomial of  $(X_{\infty}, \mathcal{O}_{\mathbb{CP}^{N_{l_{0}}}}(m_{l,l_{0}})|_{X_{\infty}}) $ is $P_{m(l)}(k)=P_{m_{l_{0}}}(m_{l,l_{0}}\cdot k)$.  Thus there is  an embedding $\Psi_{\infty}: X_{\infty} \rightarrow  \mathbb{CP}^{N_{m(l)}}$ such that $\mathcal{O}_{\mathbb{CP}^{N_{l_{0}}}}(m_{l,l_{0}})|_{X_{\infty}}=\Psi_{\infty}^{*}\mathcal{O}_{\mathbb{CP}^{N_{m(l)}}}(1)$, where $N_{m(l)}=P_{m(l)}(1)-1$.
  We have  $\Psi_{\infty}(X_{\infty})\in \mathcal{H}_{N_{m(l)}} \subset \mathcal{H}ilb^{P_{m(l)}}_{N_{m(l)}}$. Note that  $[\Psi_{\infty}(X_{\infty})]=\mathcal{Q}_{l}(\Psi_{\infty}(X_{\infty}))$, where  $\mathcal{Q}_{l}:  \mathcal{H}_{N_{m(l)}} \rightarrow \tilde{\mathcal{M}}_{m(l)}$ is the quotient map in Lemma \ref{t3.2}.
     We obtain the conclusion by identifying $\Psi_{\infty}(X_{\infty})$ with $X_{\infty}$.
  \end{proof}

\begin{lemma}\label{l01.1}  There is a continuous   inclusion $i_{l}:  \mathcal{M}_{m(l)}  \hookrightarrow  \mathcal{M}_{m(l+1)} $. \end{lemma}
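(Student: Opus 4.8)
The plan is to obtain $i_l$ as the morphism of coarse moduli spaces induced by raising the polarization to the $m_{l+1}$-th power, and then to verify injectivity by hand. First I would define a natural transformation $\mathfrak{M}_{m(l)}\to\mathfrak{M}_{m(l+1)}$ sending a flat family $(\pi_T\colon\mathcal{X}\to T,\mathcal{L})$ of polarized Calabi-Yau varieties to $(\pi_T\colon\mathcal{X}\to T,\mathcal{L}^{m_{l+1}})$. This is well defined: for every $t\in T$ the fibre $X_t$ is a Calabi-Yau variety and $\mathcal{L}|_{X_t}=\mathcal{O}_{\mathbb{CP}^{N_{m(l)}}}(1)|_{X_t}$ is very ample, so $\mathcal{L}^{m_{l+1}}|_{X_t}$ is very ample with no higher cohomology by \cite[Corollary 2.36]{Vie}, and its Hilbert polynomial is $k\mapsto P_{m(l)}(m_{l+1}k)=P_{m(l+1)}(k)$; hence $\mathcal{L}^{m_{l+1}}|_{X_t}$ realizes $X_t$ as a point of $\mathcal{H}_{N_{m(l+1)}}$ and $(\pi_T\colon\mathcal{X}\to T,\mathcal{L}^{m_{l+1}})$ lies in $\mathfrak{M}_{m(l+1)}(T)$.

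Applying this transformation to the universal family $(\mathcal{U}_{N_{m(l)}}\to\mathcal{H}_{N_{m(l)}},\mathcal{O}(1))\in\mathfrak{M}_{m(l)}(\mathcal{H}_{N_{m(l)}})$ yields an element of $\mathfrak{M}_{m(l+1)}(\mathcal{H}_{N_{m(l)}})$, hence, since $\tilde{\mathcal{M}}_{m(l+1)}$ coarsely represents $\mathfrak{M}_{m(l+1)}$ (Lemma \ref{t3.2}), a morphism $g\colon\mathcal{H}_{N_{m(l)}}\to\tilde{\mathcal{M}}_{m(l+1)}$. Since the $SL(N_{m(l)}+1)$-action on $\mathcal{H}_{N_{m(l)}}$ only alters the chosen projective embedding, it leaves the isomorphism class of $(X_p,\mathcal{O}(m_{l+1})|_{X_p})$ unchanged, so $g$ is constant on $SL(N_{m(l)}+1)$-orbits; as $\mathcal{Q}_l\colon\mathcal{H}_{N_{m(l)}}\to\tilde{\mathcal{M}}_{m(l)}$ is a geometric, hence categorical, quotient, $g$ factors uniquely as $g=i_l\circ\mathcal{Q}_l$ with $i_l\colon\tilde{\mathcal{M}}_{m(l)}\to\tilde{\mathcal{M}}_{m(l+1)}$ a morphism of varieties, in particular continuous for the analytic topology. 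On $\mathcal{M}^{P}$ this $i_l$ is nothing but the canonical identification $\mathcal{M}^{P}\cong\mathcal{M}^{P_{m_{l+1}}}$ used throughout the paper, so it restricts to the identity there, which is the compatibility the exhaustion in Theorem \ref{main} requires.

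The remaining point — and the one that needs care — is the injectivity of $i_l$ on closed points. If $i_l([X_{p_1}])=i_l([X_{p_2}])$, then by the equivalence relation defining $\mathcal{M}_{m(l+1)}$ there is an isomorphism $\psi\colon X_{p_1}\to X_{p_2}$ with $\psi^{*}\big(\mathcal{O}(m_{l+1})|_{X_{p_2}}\big)\cong\mathcal{O}(m_{l+1})|_{X_{p_1}}$, and then $\eta:=\big(\psi^{*}\mathcal{O}(1)|_{X_{p_2}}\big)\otimes\big(\mathcal{O}(1)|_{X_{p_1}}\big)^{-1}\in\operatorname{Pic}(X_{p_1})$ is an $m_{l+1}$-torsion line bundle. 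Thus injectivity follows once $\operatorname{Pic}(X_{p_1})$ is shown to be torsion-free, for then $\eta\cong\mathcal{O}_{X_{p_1}}$, $\psi$ is an isomorphism of the $\mathcal{O}(1)$-polarized pairs, and $[X_{p_1}]=[X_{p_2}]$ in $\mathcal{M}_{m(l)}$. For the torsion-freeness I would use that $X_{p_1}$, lying in $\overline{\mathcal{H}^{o}_{N_{m(l)}}}$, is a flat degeneration of simply connected Calabi-Yau manifolds with at worst canonical — hence rational — singularities; such a variety is simply connected, so $H^{1}(X_{p_1},\mathcal{O}_{X_{p_1}})=0$ and $H^{2}(X_{p_1},\mathbb{Z})$ is torsion-free, whence $\operatorname{Pic}(X_{p_1})\hookrightarrow H^{2}(X_{p_1},\mathbb{Z})$ via the exponential sequence. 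I expect controlling $\pi_1$ (equivalently the torsion in $\operatorname{Pic}$) of the possibly singular members of $\mathcal{H}_{N_{m(l)}}$ to be the main obstacle; the rest is formal.

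Finally, as a consistency check and an alternative route, the same map admits a metric description: sending the class of a Ricci-flat polarized pair $(X,\mathcal{O}(1))$ with Kähler-Einstein metric $\omega$ to the class obtained by re-embedding $X$ via an $L^{2}(m_{l+1}\omega)$-orthonormal basis of $H^{0}(X,\mathcal{O}(m_{l+1})|_X)$ gives a well-defined, $SU$-invariant map $\mathcal{R}_{N_{m(l)}}\to\mathcal{M}_{m(l+1)}$, hence a map $\mathcal{M}_{m(l)}\to\mathcal{M}_{m(l+1)}$ agreeing with $i_l$; continuity is then immediate from the continuous dependence of the Ricci-flat Kähler-Einstein metric, and of its orthonormal bases, on the Hilbert point, as established in Section 2.
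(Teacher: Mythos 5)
Your proposal is correct and follows essentially the same route as the paper: raise the polarization to the $m_{l+1}$-th power, observe that this induces a morphism into $\mathcal{H}_{N_{m(l+1)}}$ (the paper gets it from the universal property of the Hilbert scheme applied to $\mathbb{P}(\pi_{\mathcal{H}*}\mathcal{L}_{m(l)}^{m_{l+1}})$, you get it from the coarse moduli property of $\tilde{\mathcal{M}}_{m(l+1)}$ — a cosmetic difference), check equivariance, and descend to the quotients. The one point where you are actually more careful than the paper is injectivity: the paper passes from $\psi^{*}\mathcal{L}_{m(l)}^{m_{l+1}}|_{X_{2}}\cong \mathcal{L}_{m(l)}^{m_{l+1}}|_{X_{1}}$ to $\psi^{*}\mathcal{L}_{m(l)}|_{X_{2}}\cong \mathcal{L}_{m(l)}|_{X_{1}}$ with a bare ``Hence,'' which is precisely the assertion that the $m_{l+1}$-torsion line bundle $\eta$ you isolate is trivial; your justification via simple connectedness of the central fiber (hence $H^{1}(\mathcal{O})=0$ and torsion-free $H^{2}(\mathbb{Z})$, so torsion-free $\operatorname{Pic}$) is the right way to close the step the paper leaves implicit, and the needed surjectivity of $\pi_{1}(X_{t})\to\pi_{1}(X_{p})$ for a flat degeneration with reduced normal central fiber is standard.
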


\begin{proof} For any $l$,  let $\pi_{\mathcal{H}}:  \mathcal{U}_{N_{m(l)}}\rightarrow \mathcal{H}ilb^{P_{m(l)}}_{N_{m(l)}} $ be the universal family. The line bundle  $\mathcal{L}_{m(l)}=\Upsilon^{*}\mathcal{O}_{\mathbb{CP}^{N_{m(l)}}}(1)$ is a  relative very ample  where $\Upsilon$ is the composition of the  embedding and the projection $ \mathcal{U}_{N_{m(l)}}\hookrightarrow \mathbb{CP}^{N_{m(l)}}\times \mathcal{H}ilb^{P_{m(l)}}_{N_{m(l)}}\rightarrow \mathbb{CP}^{N_{m(l)}}$.  For a point  $p\in \mathcal{H}_{N_{m(l)}}$, if  $X_{p}=\pi_{\mathcal{H}}^{-1}(p)$,  then $\mathcal{L}_{m(l)}^{m_{l+1}}|_{X_{p}}$ has no higher cohomology, and $(X_{p},\mathcal{L}_{m(l)}^{m_{l+1}}|_{X_{p}})$ has  the Hilbert polynomial
$P_{m(l+1)}(k)=P_{m(l)}(m_{l+1}\cdot k)$. Note that $N_{m(l+1)}= P_{m(l+1)}(1)-1$.   Thus we have  embeddings  $\Theta: \pi_{\mathcal{H}}^{-1}(\mathcal{H}_{N_{m(l)}})\hookrightarrow \mathbb{P}( \pi_{\mathcal{H}*}(\mathcal{L}_{m(l)}^{m_{l+1}}))\cong \mathbb{CP}^{N_{m(l+1)}}\times \mathcal{H}_{N_{m(l)}}$ with $\mathcal{L}_{m(l)}^{m_{l+1}}=\Theta^{*}\mathcal{L}_{m(l+1)}$. For two choices of embeddings $\Theta$ and $\Theta'$, there is a section $u\in \mathcal{O}_{\mathcal{H}_{N_{m(l)}}}( SL(N_{m(l+1)}+1)\times \mathcal{H}_{N_{m(l)}})$ such that $\Theta'(X_{p})=\sigma(u(p), \Theta(X_{p}))$ under the $SL(N_{m(l+1)}+1)$-action $\sigma$  on $\mathcal{H}ilb^{P_{m(l+1)}}_{N_{m(l+1)}}$.    By the universal property of Hilbert scheme, we obtain a morphism $\mathfrak{I}_{l}: \mathcal{H}_{N_{m(l)}}\rightarrow \mathcal{H}ilb^{P_{m(l+1)}}_{N_{m(l+1)}}$ such that $\pi_{\mathcal{H}}^{-1}(\mathcal{H}_{N_{m(l)}})= \mathcal{U}_{N_{m(l+1)}}\times_{\mathcal{H}ilb^{P_{m(l+1)}}_{N_{m(l+1)}}} \mathcal{H}_{N_{m(l)}}$, and furthermore,  $\mathfrak{I}_{l}( \mathcal{H}_{N_{m(l)}})\subset \mathcal{H}_{N_{m(l+1)}}$. We also denote $\mathfrak{I}_{l}$ the corresponding morphism $\pi_{\mathcal{H}}^{-1}(\mathcal{H}_{N_{m(l)}}) \rightarrow  \mathcal{U}_{N_{m(l+1)}}$ without any confusion.    If $p$ and $p'\in \mathcal{H}_{N_{m(l)}}$ satisfy $p=\sigma (w, p')$ for a $w\in SL(N_{m(l)}+1)$, then  $(X_{p},\mathcal{L}_{m(l)}^{m_{l+1}}|_{X_{p}})\sim (X_{p'},\mathcal{L}_{m(l)}^{m_{l+1}}|_{X_{p'}})$.  Hence $\mathfrak{I}_{l}$  is equivariant  under the  $ SL(N_{m(l)}+1)$ and $ SL(N_{m(l+1)}+1)$ actions.  We obtain a map    $i_{l}:  \mathcal{M}_{m(l)}  \rightarrow  \mathcal{M}_{m(l+1)} $ by taking the quotients, which is continuous under the analytic topology.

If  $i_{l}([X_{1}])=i_{l}([X_{2}])$, for two $[X_{1}]$ and $[X_{2}]\in \mathcal{M}_{m(l)}$,   then there is an isomorphism $\tilde{\psi}: \mathfrak{I}_{l}(X_{1})  \rightarrow \mathfrak{I}_{l}(X_{2})$ with $\tilde{\psi}^{*} \mathcal{L}_{m(l+1)}|_{\mathfrak{I}_{l}(X_{2})}\cong \mathcal{L}_{m(l+1)}|_{\mathfrak{I}_{l}(X_{1})}$.
 Thus we obtain  an isomorphism $\psi: X_{1}  \rightarrow X_{2}$ with $\psi^{*} \mathcal{L}_{m(l)}^{m(l+1)}|_{X_{2}}\cong \mathcal{L}_{m(l)}^{m(l+1)}|_{X_{1}}$.     Hence $\psi^{*} \mathcal{L}_{m(l)}|_{X_{2}}\cong \mathcal{L}_{m(l)}|_{X_{1}}$, and $[X_{1}]=[X_{2}]$,  i.e.  $i_{l}$ is injective.
\end{proof}

  \begin{proof}[Proof of Theorem  \ref{main} ]  We define  \begin{equation}\overline{\mathcal{M}}^{P}= \bigcup_{l\in\mathbb{N}}\mathcal{M}_{m(l)}\end{equation} by  using the inclusions $i_{l}$.  Note that $\mathcal{M}^{P}$ is an open dense  subset of each $\mathcal{M}_{m(l)}$ by Lemma \ref{t3.2} and thus of  $\overline{\mathcal{M}}^{P}$.

   We extend  the map $\mathcal{CY}: \mathcal{M}^{P} \rightarrow \mathcal{M}et$ to a     map $\overline{\mathcal{CY}}: \overline{\mathcal{M}}^{P}\rightarrow \overline{\mathcal{CY}( \mathcal{M}^{P})}$ by the following. For any $x\in \mathcal{M}_{m(l)} \subset \overline{\mathcal{M}}^{P}$, let $\omega_{l}$ be the Ricci-flat K\"{a}hler-Einstein metric on $X_{p}$ representing $c_{1}(\mathcal{O}_{\mathbb{CP}^{N_{m(l)}}}|_{X_{p}})$, where $p\in \mathcal{H}_{N_{m(l)}}\subset \mathcal{H}ilb^{P_{m(l)}}_{N_{m(l)}}$,  $\mathcal{Q}_{m(l)}(p)=x$, and $X_{p}=\pi_{\mathcal{H}}^{-1}(p)$ from the construction in Section 3.2.  For a  normalized curve $f: \Delta \rightarrow \mathcal{H}_{N_{m(l)}}$ with $f(\Delta^{*} )\subset \mathcal{H}_{N_{m(l)}}^{o}$, we have a degeneration of polarized Calabi-Yau manifolds $(\mathcal{U}_{N_{m(l)}}\times_{\mathcal{H}_{N_{m(l)}}} \Delta \rightarrow \Delta,  \mathfrak{p}^{*}\mathcal{O}_{\mathbb{CP}^{N_{m(l)}}}(1)|_{\mathcal{U}_{N_{m(l)}}} ) $ with central fiber $X_{p}$, where $ \mathfrak{p}$ is the projection to the first factor.
     By Lemma \ref{pro0.1},  $\omega_{l}$ induces a compact metric space structure on $X_{p}$.

  We define   $$  \overline{\mathcal{CY}}(x)=(X_{p}, \frac{1}{m(l)}\omega_{l}).$$    If we consider $i_{l}(x)\in  \mathcal{M}_{m(l+1)}$, then $X_{p}$ is isomorphic to $\mathfrak{I}_{l}(X_{p})$,  $(X_{p}, \frac{1}{m(l)}\omega_{l})$ is isometric to $(\mathfrak{I}_{l}(X_{p}), \frac{1}{m(l+1)}\omega_{l+1})$ as metric spaces  by $i_{l}(x)=\mathcal{Q}_{m(l+1)} (\mathfrak{I}_{l}(X_{p}) ) $, and    $\mathfrak{I}_{l}^{*}\mathcal{O}_{\mathbb{CP}^{N_{m(l+1)}}}(1)|_{\mathfrak{I}_{l}(X_{p})}\cong \mathcal{O}_{\mathbb{CP}^{N_{m(l)}}}(m_{l+1})|_{X_{p}}$.  Thus     $\overline{\mathcal{CY}}$ is   well-defined.
  If $X_{p}$ is smooth, there is an ample line bundle $L_{p}$ such that $L_{p}^{m(l)}\cong \mathcal{O}_{\mathbb{CP}^{N_{m(l)}}}(1)|_{X_{p}}$ and $[X_{p}, L_{p}]\in \mathcal{M}^{P}$. Hence $\frac{1}{m(l)}\omega_{l} \in c_{1}(L_{p})$ and $\overline{\mathcal{CY}}|_{\mathcal{M}^{P}}=\mathcal{CY}$.

   For any compact metric space $(Y, d_{Y}) \in \overline{\mathcal{CY}(\mathcal{M}^{P})}$,  we have a sequence  $\left[X_{k}, L_{k}\right]\in \mathcal{M}^{P}(l_{1})$, for an $l_{1}> {\rm diam}_{d_{Y}}(Y)$,  such that  $(X_{k}, \omega_{k})$   converges to $(Y, d_{Y})$ in the Gromov-Hausdorff sense, where $\omega_{k}\in c_{1}(L_{k})$ is the Ricci-flat K\"{a}hler-Einstein metric.  By Lemma \ref{l4.1}, there is a Calabi-Yau variety $X_{\infty}$ homeomorphic to $Y$ and satisfying that  $X_{\infty}$ can be embedded in $ \mathbb{CP}^{N_{m(l_{1})}}$ and  $\left[X_{\infty}\right]\in  \mathcal{M}_{m(l_{1})}$. Furthermore, the metric structure $d_{Y}$ is induced by the singular Ricci-flat K\"{a}hler-Einstein metric $\omega \in \frac{1}{m(l_{1})} c_{1} (\mathcal{O}_{\mathbb{CP}^{N_{m(l_{1})}}}(1)|_{X_{\infty}})$ by Theorem \ref{D-S}, which implies that $\overline{\mathcal{CY}} (\left[X_{\infty}\right])=(X_{\infty}, \omega)$,  i.e.   $\overline{\mathcal{CY}} $ is  surjective.
      We obtain i), ii)  and iii).

    Let $(\pi_{\Delta}: \mathcal{X} \rightarrow \Delta, \mathcal{L}) $ be a degeneration of polarized  Calabi-Yau manifolds satisfying the condition in iv).  We assume that $\mathcal{L}$ is relative very ample, and $[X_{t}, \mathcal{L}|_{X_{t}}]\in \mathcal{M}^{P}$,  $t\in\Delta^{*}$. By (\ref{diam}), there is an $l_{2}>0$ such that ${\rm diam}_{\omega_{t}}(X_{t})\leq l_{2}$ for $t\in \Delta^{*}$, where $\omega_{t}$ is the unique Ricci-flat K\"{a}hler-Einstein metric representing $c_{1} (\mathcal{L}|_{X_{t}})$. By the above construction,  we have a $m(l_{2})>0$ such that we have a morphism  $\tilde{\Psi}:\mathcal{X}\hookrightarrow   \Delta\times \mathbb{CP}^{N_{m(l_{2})}}\rightarrow  \mathbb{CP}^{N_{m(l_{2})}}$ with  $ \mathcal{L}^{m(l_{2})}|_{X_{t}}\cong \tilde{\Psi}^{*} \mathcal{O}_{\mathbb{CP}^{N_{m(l_{2})}}}(1)|_{X_{t}}$.  There is a  unique morphism $\rho: \Delta \rightarrow \mathcal{M}_{m(l_{2})} $ such that $\rho(t)=[\tilde{\Psi}(X_{t})]$  by  the conclusion of $\mathcal{M}_{m(l_{2})}$ coarsely representing the functor  $\mathfrak{M}_{m(l_{2})}$ in Lemma  \ref{t3.2}.
   The Gromov-Hausdorff convergence in iv)  is a consequence of  Lemma \ref{pro0.1}.
    \end{proof}

  \begin{proof}[Proof of Theorem    \ref{main2}]  For any point  $x\in  \overline{\mathcal{M}}^{P}\backslash   \mathcal{M}^{P}$, we assume that $x\in  \mathcal{M}_{m(l)} \subset \overline{\mathcal{M}}^{P}$ for an $m(l)>0$.  Let $p\in \mathcal{H}_{N_{m(l)}}\subset \mathcal{H}ilb^{P_{m(l)}}_{N_{m(l)}}$,  $\mathcal{Q}_{m(l)}(p)=x$, and $X_{p}=\pi_{\mathcal{H}}^{-1}(p)$, where  $\pi_{\mathcal{H}}: \mathcal{U}_{N_{m(l)}}\rightarrow  \mathcal{H}_{N_{m(l)}}$ is the universal family.  Let $\tau:\Delta\rightarrow \mathcal{H}_{N_{m(l)}}$ be a morphism such that $\tau(0)=p$ and $\tau(\Delta^{*})\subset  \mathcal{H}_{N_{m(l)}}^{o}$, and $\mathcal{X}=\mathcal{U}_{N_{m(l)}}\times_{\mathcal{H}_{N_{m(l)}}}\Delta\rightarrow \Delta$ be the  degeneration of Calabi-Yau manifolds. Since the central fiber $X_{p}$ is a Calabi-Yau variety, \cite[Proposition 2.3]{Wang1} shows that the Weil-Petersson distance between the interior $\Delta^{*}$ and $p$ is finite.  Hence we obtain i) by composing the quotient map $\mathcal{Q}_{m(l)}$.

     Let $(\pi_{\Delta}: \mathcal{X} \rightarrow \Delta, \mathcal{L})$ be a degeneration of polarized  Calabi-Yau manifolds.    If we assume that the Weil-Petersson distance between  $\Delta^{*}$ and $0$ is finite, then $(\pi_{\Delta}: \mathcal{X} \rightarrow \Delta, \mathcal{L})$ is birational to a new family $(\pi_{\Delta}': \mathcal{X}' \rightarrow \Delta, \mathcal{L}')$  such that $(\mathcal{X}\backslash X_{0}, \mathcal{L})\cong (\mathcal{X}'\backslash X_{0}', \mathcal{L}')$, and $X_{0}'$ is a Calabi-Yau variety  by   \cite[Theorem 1.2]{To}. We obtain  ii) by  the same argument as in the proof of iv) in  Theorem  \ref{main}.
\end{proof}

 \begin{remark}  Note that   $\overline{\mathcal{M}}^{P}$  parameterizes certain Calabi-Yau varieties, which are proven to be K-stable by \cite{Od0}.    Hence Theorem \ref{main}  gives an evidence to the conjecture of the existence of   K-moduli spaces  (cf. \cite[Conjecture 3.1]{Od}).
\end{remark}

 \begin{remark} In the case of K3 surfaces, Theorem \ref{main} is weaker than  results of  \cite{Kob,KT}. Here the moduli space that we consider  has more restrictions  than those moduli spaces in \cite{Kob,KT}, which parameterize  K3 surfaces with generalized  (not necessary integral) polarizations and without any fixed  Hilbert polynomial. 
\end{remark}

\section{A remark for compactifications}
Finally,   we remark that there actually  is   a natural Gromov-Hausdorff compactification of  $ \overline{\mathcal{M}}^{P}$.    If we define the  normalized Calabi-Yau map  $$ \mathcal{NCY}: \mathcal{M}^{P}  \rightarrow \mathcal{M}et,  \  \  {\rm by}  \  \  \left[X,L\right]  \mapsto  (X, {\rm diam}_{\omega}^{-2}(X)\omega),$$
 where $\omega\in c_{1}(L)$ is the unique Ricci-flat K\"ahler-Einstein metric,  then the  Gromov's precompactness theorem (cf. \cite{Grom,Rong})  asserts that   the closure $\overline{ \mathcal{NCY}( \mathcal{M}^{P} )} $  of $ \mathcal{NCY}( \mathcal{M}^{P} )$ in $ \mathcal{M}et$ is compact.    Moreover,  the map $$ \overline{\mathcal{CY}(\mathcal{M}^{P})}  \rightarrow  \overline{ \mathcal{NCY}( \mathcal{M}^{P} )},  \  \  \  (Y , d_{Y}) \mapsto (Y,  {\rm diam}_{d_{Y}}^{-1}(Y) d_{Y})$$ is injective and continuous.  However,  because of the collapsing phenomenon,  the algebro-geometric  structure of $\overline{ \mathcal{NCY}( \mathcal{M}^{P} )}$ is unclear, and it is not a compactification in the usual   algebraic geometry sense. The Gromov-Hausdorff  compactification is studied for the moduli spaces of compact  Riemann surfaces  and Abelian varieties in a recent preprint \cite{Od2}.

 Let   $( \mathcal{X} \rightarrow \Delta, \mathcal{L})$ be   a degeneration of polarized Calabi-Yau manifolds of dimension $n$   such that  the diameter of the Ricci-flat K\"ahler metric $\omega_{t}\in c_{1}(\mathcal{L}|_{X_{t}})$ tends to infinite when $t \rightarrow 0$, i.e. ${\rm diam}_{\omega_{t}}(X_{t})  \rightarrow \infty$.   Since ${\rm Vol}_{\omega_{t}}(X_{t})=\frac{1}{n!}c_{1}^{n}(\mathcal{L}|_{X_{t}})\equiv {\rm const.}$,  $(X_{t}, \omega_{t})$ must collapse (cf. \cite{An1}), i.e. for  metric 1-balls $B_{\omega_{t}}(1)$,  ${\rm Vol}_{\omega_{t}} (B_{\omega_{t}}(1))  \rightarrow 0$ when $t \rightarrow 0$.  If $0\in \Delta$ is a large complex limit  point (cf. \cite{GHJ}), a refined version of the Strominger-Yau-Zaslow (SYZ) conjecture (cf. \cite{SYZ}) due to Gross, Wilson,  Kontsevich and Soibelman (cf. \cite{GW,KS,KS2})  says that   $ {\rm diam}_{\omega_{t}}(X_{t}) \sim\sqrt{ -\log |t|}$,  and  $(X_{t}, {\rm diam}_{\omega_{t}}^{-2}(X_{t}) \omega_{t})$ converges to a compact metric space $(B,d_{B})$  in the Gromov-Hausdorff sense. If $h^{i,0}(X_{t})=0$, $1\leq i <  n$, then  $B$
 is homeomorphic to $S^{n}$.   Furthermore, there is an open subset $B_{0}\subset B$ with   ${\rm codim}_{\mathbb{R}}B\backslash B_{0} \geqslant 2$, $B_{0}$ admits  a real  affine structure, and the metric $d_{B}$ is induced by a Monge-Amp\`ere  metric $g_{B}$ on $B_{0}$, i.e.  under  affine coordinates $x_{1},  \cdots, x_{n}$,  there is a potential function $\phi$ such that $$g_{B}= \sum_{ij} \frac{ \partial^{2} \phi}{ \partial x_{i}  \partial x_{j}} dx_{i} dx_{j},    \   \  {\rm and}  \    \det (\frac{\partial^{2}\phi}{ \partial x_{i}  \partial x_{j} }) =1.$$

   This conjecture was verified by Gross and Wilson for fibred  K3 surfaces with only type $I_{1}$ singular fibers  in \cite{GW}, and was studied for higher dimensional HyperK\"ahler manifolds in \cite{GTZ, GTZ2}. Moreover, \cite{GTZ2} extends Gross-Wilson's result to all elliptically fibred K3 surfaces.  In \cite{KS2},  it was further conjectured that the Gromov-Hausdorff limit  $B$ is homeomorphic to the Calabi-Yau skeleton  of the Berkovich analytic space associated to $\mathcal{X} \times_{\Delta}\Delta^{*}$ by taking some base change if necessary, which gives an algebro-geometric description of $B$.
If we grant this version of  SYZ conjecture, we will have a nice algebro-geometric structure for the compactification at least for some one dimensional moduli space  $\mathcal{M}^{P}$.

\begin{example} A simple concrete example for Theorem \ref{main} and Theorem \ref{main2} is the mirror Calabi-Yau 3-fold of the quintic 3-fold constructed in \cite{COGP} (cf. Section 18 in  \cite{GHJ}), i.e.   $X_{t}$ is the crepant resolution of the quotient  $$Y_{s}= \{\left[z_{0}, \cdots, z_{4}\right]\in \mathbb{CP}^{4} | z_{0}^{5}+ \cdots + z_{4}^{5}+s z_{0} \cdots z_{4}=0\}/(\mathbb{Z}_{5}^{5}/\mathbb{Z}_{5})$$ of the quintic by $\mathbb{Z}_{5}^{5}/\mathbb{Z}_{5}$,  where $s^{5}= t\in \mathbb{C}$.    By choosing a polarization, $ \mathcal{M}^{P}= \mathbb{C}\backslash \{ 1\}$, and $0$ is an orbifold point of $ \mathcal{M}^{P}$.   When $t=1$,    $X_{1}$  is a Calabi-Yau variety with finite  ordinary double points, and    however, $t=\infty$ is a large complex limit point, which implies that  $t=\infty$ is the cusp end of  $ \mathcal{M}^{P}$   and   has infinite Weil-Petersson distance.
 Thus  $\overline{\mathcal{M}}^{P}=\mathbb{C}$.    Again, if we grant the refined  version of  SYZ conjecture, the point   $t=\infty$ corresponds  to  the  $S^{3}$ with a Monge-Amp\`ere  metric on an open dense subset, and consequently,     $\overline{\mathcal{M}}^{P}$  has a natural Gromov-Hausdorff  compactification $\mathbb{CP}^{1}$ with a continuous surjection  $\overline{ \mathcal{NCY}}: \mathbb{CP}^{1} \rightarrow \overline{ \mathcal{NCY}( \mathcal{M}^{P} )}$ extending $\mathcal{NCY}$.
 \end{example}

\end{document}